%
%
%
%
%



\RequirePackage{fix-cm}

\documentclass[smallextended]{svjour3}

\smartqed

\usepackage{amsfonts}
\usepackage{amsmath}
\usepackage{amssymb}
\usepackage{color}
\usepackage{graphicx}
\usepackage{mathtools}

\newtheorem{thm}{Theorem}[section]
\newtheorem{prop}[thm]{Proposition}
\newtheorem{lem}[thm]{Lemma}

\journalname{Preprint}

\begin{document}

\title{Strong Convexity in Stochastic Programs with Deviation Risk Measures}

\author{Matthias Claus \and R\"udiger Schultz \and Kai Sp\"urkel}
\authorrunning{Claus, Schultz, Sp\"urkel}

\institute{
Matthias Claus \at University of Duisburg-Essen, Thea-Leymann-Str. 9, D-45127 Essen, Germany \\
Tel.: +49 201 1836887\\
\email{matthias.claus@uni-due.de} \\
ORCID iD: 0000-0002-2617-0855
\and
R\"udiger Schultz \at University of Duisburg-Essen, Thea-Leymann-Str. 9, D-45127 Essen, Germany \\
Tel.: +49 201 1836880\\
\email{ruediger.schultz@uni-due.de}
\and
Kai Sp\"urkel \at University of Duisburg-Essen, Thea-Leymann-Str. 9, D-45127 Essen, Germany \\
Tel.: +49 201 1836883\\
\email{kai.spuerkel@uni-due.de}
}

\date{Received: date / Accepted: date}

\maketitle
\begin{abstract}
We give sufficient conditions for the expected excess and the upper semideviation of recourse functions to be strongly 
convex. This is done in the setting of two-stage stochastic programs with complete linear recourse and random right-hand
side. This work extends results on strong convexity of risk-neutral models.\\ 

\keywords{Two-Stage Stochastic Programming \and Linear Recourse \and Strong Convexity \and Expected Excess \and Semideviation}
\subclass{90C15 \and 90C31}
\end{abstract}



\section{Introduction} \label{SecIntroduction}
Stochastic programs with linear recourse belong to those optimization problems which  are understood best, both  structurally and algorithmically. To some extent, this statement gains its substance from fairly wide spread convexity properties. 

This has spurred research into conditions for feasibility and optimality that are verifiable with reasonable effort. Another important topic in this respect is  stability of the model under data perturbation, (stochastic) approximation, or statistical  estimation. In fact, for these and any type of change  the  model could be  exposed to, it is desirable that the resulting changes in  optimal values and sets of feasible or optimal solution points stay small for small model changes.
  
Risk neutral stochastic programs with linear recourse obey ``some basic'' convexity and may fulfill a requirement called strong convexity. In the present paper, the accent is on deriving  verifiable sufficient conditions for objective functions to be strongly convex. Extending the purely expectation based analysis in \cite{Schultz1994}, we address risk aversion via deviation risk measures.

The paper  is  organized as follows. In section \ref{SecPreliminaries} preliminaries on stochastic programs and strong convexity are collected. Section \ref{SecSufficientConditions} contains the main results on strong convexity of stochastic programs with deviation risk measures. Section \ref{SecConclusion} concludes with remarks on consequences for quantitative stability of the considered two-stage models. 

\section{Preliminaries} \label{SecPreliminaries}

\subsection{Stochastic Programs with Linear Recourse} \label{SubsecStochasticPrograms}

The stochastic program with recourse is among the most popular models in optimization under uncertainty, see \cite{BirgeLouveaux2011}, \cite{ShapiroDentchevaRuszynski2014} for introduction into stochastic programming. In the two-stage linear recourse  case there is a  random linear program
\begin{equation}
\label{rando}
\min\left\{h^\top x + q^\top y \; | \; Tx + Wy = z(\omega), \; y \geq 0 \right\}
\end{equation}
with all vectors  and  matrices of  conformable (finite) dimensions. $z(\omega)$ is a random vector on some probability space $\left(\mathbb{R}^s, \mathcal{B}^s, \mu\right)$ with $\mathcal{B}^s$ the Borel $\sigma$-algebra on $\mathbb{R}^s$, and $\mu$ denotes a Borel probability measure on $\mathbb{R}^s$. The optimization problem \eqref{rando} comes with a typical requirement in stochastic programming, nonanticipativity, which  amounts to using only those information that is available upon making  decisions.

This  specifies the dynamics underlying \eqref{rando} in the following way: After having chosen  a first-stage decision $x$, the random vector $z$ is unveiled such that a well-defined optimization problem in $y$ remains.  From its set of  optimal  solutions,  a second-stage recourse solution is taken, altogether  generating the  random optimal  value 
\begin{equation}
\label{rv}
\phi(x,\omega) \coloneqq \min_y \left\{q^\top y \; | \; Wy = z(\omega) -Tx, \; y \geq  0\right\}.
\end{equation}
Clearly, when referring to decision making  under uncertainty in the  present context, it is  all about finding  a ``best  possible'' decision $x$, the only  one  to be  taken  under uncertainty (or incomplete information). Data for  making  the decision are ``compressed'' in the  real numbers $\phi(x,\omega)$  where $x$ varies  in some feasible  subset $X$ of $\mathbb{R}^s$ and $z(\omega)$ in the  support of the probability measure $\mu$.

It will be  convenient to  adopt the  perspective  about $\phi(x,\omega)$ of representing a random variable induced  by $x$. Finding  a  best  $x$ then means  finding  a  best member in the  family of random variables (= measurable  functions) $\{\phi(x,\cdot) \; | \; x\in X \}$. The  whole  history of two-stage models in  stochastic programming can be  embedded into  this view. The  most   straightforward method  of ranking random  variables  is  assigning them real  numbers  and  resorting  to the  usual   order relation on  $\mathbb{R}$. The  most popular assignment in  this  respect is  taking  the  expected  value which gives   the  optimization problem
\begin{equation}\nonumber
\min\left\{ Q_\mathbb{E}(x) \coloneqq \int_{\mathbb{R}^s} \phi(x,z)\,\mu(\text{d}z)\;|\; x\in X\right\}
\end{equation}
Historically, models  of the above  type  have marked  the  advent  of  stochastic programming, \cite{Beale1955}, \cite{Dantzig1955}, not the least due to convexity properties they obey.
Indeed, consider the value function
\begin{equation}
\label{valfu}
\varphi(t) \coloneqq \min_y \left\{q^\top y \; | \; Wy = t, \; y \geq 0 \right\}.
\end{equation}
It belongs to the folklore in stochastic programming that, under mild assumptions, namely if the feasible sets in \eqref{valfu} for all $t\in \mathbb{R}^s$ and the feasible set of the linear programming dual to \eqref{valfu}  all are nonempty, one obtains by duality that $\varphi(t) = \max_{\{\xi\,:\, W^\top\xi\le q\}}t^\top\xi$, 
confirming convexity as a pointwise maximum of linear functions. Finally, linearity of the  integral yields convexity of $Q_\mathbb{E}$.

\subsection{Strong Convexity} \label{SubsecStrongConvexity}

Convex functions have an important  role in the analysis of extremal problems, more specifically and  without being exhaustive, the investigation of structures, 
the design of algorithms, and the implementation of applications. There are various subclasses of convex functions allowing for specific conclusions, see for instance the  treatment in Chapter~7 of \cite{BauschkeCombettes2011} and in \cite{BorweinVanderwerff2010}.

The present paper is devoted to one of these subclasses, namely strongly convex functions in recourse stochastic programs. A function $f:\mathbb{R}^s \subset V \to \mathbb{R}$ defined on some convex set $V$ is called strongly convex (with modulus $\kappa$ on $V$) if there exists a real number $\kappa>0$ such that for all $x, x' \in V$ and all $\lambda$ with $0\leq\lambda\leq 1$
\begin{equation*}
f(\lambda x' + (1-\lambda) x)\;\;\le\;\;\lambda f(x')\;+\;(1-\lambda) f(x)\;-\;\frac{\kappa}{2} \lambda(1-\lambda)\|x'-x\|^2.
\end{equation*}
The literature of strong convexity appears somewhat scattered. The already mentioned books \cite{BauschkeCombettes2011} and \cite{BorweinVanderwerff2010}, the early reference \cite{Poljak1966} and the section on background material in \cite{ShapiroDentchevaRuszynski2014} all may serve as supplementary resources. In what follows we collect some material on strong convexity to indicate its importance and this way motivate our research on  verification of strong convexity for a substantial class of stochastic programs. 

Let $f: \mathbb{R}^d\to\mathbb{R}$ be a convex function. Then $f$ is strongly convex with modulus $\kappa$ if and only 
if $F(x) := f(x) - \frac{\kappa}{2}\|x\|^2$ is  convex, see \cite[Page 393]{ShapiroDentchevaRuszynski2014} for a proof. Assume for  notational convenience that $f$ is also  differentiable, and write down the following characterization of convexity for the function $F$ at some $x\in\mathbb{R}^d$
\begin{equation*}
 F(x') - F(x)\,\ge \, F'(x)(x'-x) \qquad \mbox{ for all } x, x'\in\mathbb{R}^d.
\end{equation*}
Substituting the  definition of $F$ yields
\begin{equation*}
 f(x') - f(x)- \frac{\kappa}{2}\|x'\|^2 + \frac{\kappa}{2}\|x\|^2     \,\ge \, (f'(x)-\kappa x^\intercal)(x'-x) \qquad 
 \mbox{ for all } x, x'\in\mathbb{R}^d.
\end{equation*}
After simple formula manipulation one obtains
\begin{equation}
\label{quadra}
f(x') \;\ge\; f(x) + f'(x)(x'-x) + \frac{\kappa}{2}\|x'-x\|^2\qquad 
 \mbox{ for all } x, x'\in\mathbb{R}^d. 
\end{equation}
This justifies the following interpretation: While, at any point of the interior of its domain, a  convex function is globally supported by an  affine function, a strongly convex function is so by a quadratic function with postively definite quadratic form.

Fixing $x, x'\in \mathbb{R}^d$, letting $x'$ take the role of $x$ in \eqref{quadra}, and  adding up \eqref{quadra} implies the  following characterization of strong convexity in terms  of strong  monotonicity of the  gradient
\begin{equation*}
(f'(x')- f'(x))(x'-x)   \;\geq\; \kappa \|x'-x\|^2  \qquad \mbox{ for all } x, x'\in\mathbb{R}^d.
\end{equation*}
If point $x$ in \eqref{quadra} is a minimizer of $f$ then the gradient in \eqref{quadra} vanishes, leaving
\begin{equation}
\label{growth}
f(x') \;\ge\; f(x) + \frac{\kappa}{2}\|x'-x\|^2\qquad \mbox{ for all } x, x'\in\mathbb{R}^d 
\end{equation}
what often is called the quadratic growth condition.

Beside the fact that the quadratic growth condition implies uniqueness of the minimizer it is the possibility of estimating distances of arguments by differences of objective function values that is technically attractive. In particular this  holds for estimating speed of convergence for  minimizing sequences, for deriving quantitative stability of optimization problems, and for detecting second-order asymptotics in the Sample Average Approximation Method.

\section{Sufficient Conditions for Strong Convexity of Recourse Functions} \label{SecSufficientConditions}

Before taking a look at the risk-neutral setting let us observe a  general feature:  namely using \eqref{valfu} one obtains 
\begin{equation*}
 Q_\mathbb{E}(x) = \int_{\mathbb{R}^s} \varphi(z-Tx)\,\mu(\text{d}z).
\end{equation*}
Thus, for $Q_\mathbb{E}$ to be  strongly convex it is  necessary that the null space of the  matrix $T$ is $\{0\}$. Since this assumption is unacceptably strong, the  subsequent studies will be  carried out with respect to the  transformed vector $Tx \in\mathbb{R}^s$ which we shall refer to as $x$ henceforth for the sake of simplicity. 
This turns the current object of study into
\begin{equation*}
 Q_\mathbb{E}(x) = \int_{\mathbb{R}^s} \varphi (z-x)\,\mu(\text{d}z).
\end{equation*}
with some Borel-measure $\mu$ and  $\varphi(t) = \min \{ q^\intercal y \; | \; Wy = t, \ y \geq 0 \}$. In \cite{Schultz1994} the following result regarding strong convexity of $Q_\mathbb{E}$ is shown:

\begin{thm}\label{theorem1}
Assume that the following conditions are satisfied:
\begin{itemize}
\item[A1] For every $t$ there exist some $y \geq 0$ such that $Wy = t$. (Complete recourse)
\item[A2] There exists some $\xi$ with $W^\intercal \xi < q$. (Strengthened sufficiently expensive recourse)
\item[A3] $\| z \|$ is $\mu$-integrable. (Finite first moments)
\item[A4] $\mu$ has a density $\theta$ with respect to the Lebesgue-measure and there exists a convex open set $V$, 
constants $r,\rho > 0$  such that $\theta \geq r$ a.s. on $V + B_\rho(0)$.
\end{itemize}
Then $Q_{\mathbb{E}}$ is strongly convex on $V$.
\end{thm}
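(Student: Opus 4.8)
The plan is to verify strong convexity of $Q_\mathbb{E}$ through strong monotonicity of its gradient and to reduce that, in turn, to a purely geometric estimate for the piecewise linear function $\varphi$. First I would collect the structure available under A1--A3. By the linear programming duality recalled above, A1 (primal feasibility for every $t$) together with A2 (dual feasibility) give $\varphi(t)=\max\{t^\top\xi\mid W^\top\xi\le q\}$, the support function of the polyhedron $\Xi:=\{\xi\mid W^\top\xi\le q\}$; A1 forces $\varphi$ to be finite everywhere, hence $\Xi$ is bounded, and the strict inequality in A2 makes $\Xi$ full dimensional, so $\Xi$ is a full–dimensional polytope. Consequently $\varphi$ is convex, positively homogeneous, globally Lipschitz with constant $L:=\max_{\xi\in\Xi}\|\xi\|$, and piecewise linear: it is differentiable off a finite union of hyperplanes through the origin, and on each of the finitely many full–dimensional complementary cones $\nabla\varphi$ is constant and equal to a vertex of $\Xi$; write $H_1,\dots,H_k$ with unit normals $\nu_1,\dots,\nu_k$ for those hyperplanes across which $\nabla\varphi$ actually jumps. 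From A3 and $|\varphi(z-x)|\le L(\|z\|+\|x\|)$ the integral $Q_\mathbb{E}(x)=\int\varphi(z-x)\,\mu(\mathrm dz)$ is finite and convex; since $\mu$ has density $\theta$, the exceptional set $\{z:z-x\in\bigcup_\ell H_\ell\}$ is $\mu$–null, and dominated convergence (the difference quotients being bounded by $L$) permits differentiation under the integral, so that $\nabla Q_\mathbb{E}(x)=-\int\nabla\varphi(z-x)\,\theta(z)\,\mathrm dz$.

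By the characterization of strong convexity via strong monotonicity of the gradient in Section~\ref{SubsecStrongConvexity}, it suffices to produce $\kappa>0$ with $\bigl(\nabla Q_\mathbb{E}(x')-\nabla Q_\mathbb{E}(x)\bigr)^\top(x'-x)\ge\kappa\|x'-x\|^2$ for all $x,x'\in V$. Putting $v:=x'-x$ and substituting $u:=z-x$,
\begin{equation*}
\bigl(\nabla Q_\mathbb{E}(x')-\nabla Q_\mathbb{E}(x)\bigr)^\top v=\int_{\mathbb{R}^s}\bigl[\nabla\varphi(u)-\nabla\varphi(u-v)\bigr]^\top v\;\theta(u+x)\,\mathrm du .
\end{equation*}
The integrand is nonnegative (monotonicity of the gradient of the convex function $\varphi$), and for $u\in B_\rho(0)$ one has $u+x\in B_\rho(x)\subseteq V+B_\rho(0)$, where $\theta\ge r$; hence the right–hand side is at least $r\int_{B_\rho(0)}\bigl[\nabla\varphi(u)-\nabla\varphi(u-v)\bigr]^\top v\,\mathrm du$. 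Thus everything reduces to the geometric estimate: there is $\kappa_0>0$ depending only on $W,q,\rho$ with $\int_{B_\rho(0)}[\nabla\varphi(u)-\nabla\varphi(u-v)]^\top v\,\mathrm du\ge\kappa_0\|v\|^2$ for all $v$ with $\|v\|\le\operatorname{diam}V$ — which is the only range that matters, since $V$, being open, convex and contained in the finite–measure set $V+B_\rho(0)$, is bounded.

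This estimate is the crux, and I expect it to be the main obstacle, because the obvious bound carries the wrong power of $\|v\|$: using $\nabla\varphi(\lambda u)=\nabla\varphi(u)$ for $\lambda>0$ and the change of variables $u=\|v\|\tilde u$, $e:=v/\|v\|$,
\begin{equation*}
\int_{B_\rho(0)}[\nabla\varphi(u)-\nabla\varphi(u-v)]^\top v\,\mathrm du=\|v\|^{s+1}\!\!\int_{B_{\rho/\|v\|}(0)}[\nabla\varphi(\tilde u)-\nabla\varphi(\tilde u-e)]^\top e\,\mathrm d\tilde u ,
\end{equation*}
so one must show the last integral is $\gtrsim R^{\,s-1}$ as $R:=\rho/\|v\|\to\infty$, uniformly in the unit vector $e$; that factor exactly absorbs the surplus in $\|v\|^{s+1}$ and leaves $\|v\|^2$. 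For the $R^{s-1}$ growth I would argue: since $\Xi$ is full dimensional, $\varphi$ is affine along no line, which forces $\nu_1,\dots,\nu_k$ to span $\mathbb{R}^s$, so $\max_\ell|\langle e,\nu_\ell\rangle|\ge\gamma$ for some $\gamma>0$ and all unit $e$. Fixing such an $\ell$ and a facet $F\subseteq H_\ell$ of the normal–cone arrangement across which $\nabla\varphi$ jumps by $c_\ell\nu_\ell$ with $c_\ell>0$, a direct computation gives $[\nabla\varphi(\tilde u)-\nabla\varphi(\tilde u-e)]^\top e=c_\ell|\langle e,\nu_\ell\rangle|$ on the slab where $\langle\tilde u,\nu_\ell\rangle$ lies between $0$ and $\langle e,\nu_\ell\rangle$, near $F$; since $F$ is an $(s-1)$–dimensional cone from the origin and the slab has width $\asymp\gamma$, the part of this slab inside $B_R(0)$ and away from the remaining facets has volume $\gtrsim\gamma R^{\,s-1}$ for large $R$. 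This settles the estimate for small $\|v\|$. On the remaining compact range $\rho/R_0\le\|v\|\le\operatorname{diam}V$ one uses that $v\mapsto\int_{B_\rho(0)}[\nabla\varphi(u)-\nabla\varphi(u-v)]^\top v\,\mathrm du$ is continuous and strictly positive there — the integrand equals $\max_{\xi\in\Xi}\langle v,\xi\rangle-\min_{\xi\in\Xi}\langle v,\xi\rangle>0$ on a neighbourhood of the part of the segment $\{tv:0<t<1\}$ lying in $B_\rho(0)$ — hence bounded below by a multiple of $\|v\|^2$ on that set. Combining the two ranges produces $\kappa_0$, and therefore $Q_\mathbb{E}$ is strongly convex on $V$ with modulus $\kappa=r\kappa_0$.
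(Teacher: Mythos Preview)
Your argument is correct and takes a genuinely different route from the one the paper cites from \cite{Schultz1994}. Both start by differentiating under the integral and aiming at strong monotonicity of the gradient, but then diverge: the paper's approach rewrites $Q'_\mathbb{E}(x)u=-\sum_i\mu(x+K_i)\,d_i^\top u$ as the mean of a discrete distribution, passes via Stieltjes integration by parts to $\int(F_x(\tau)-F_{x+u}(\tau))\,\mathrm d\tau$, and estimates this through the polyhedral inclusion~\eqref{inclusion2} combined with the cone bound of Lemma~\ref{lemma2}. You instead localize to $B_\rho(0)$ using the density lower bound, rescale to reduce matters to the $R^{s-1}$-growth of $\int_{B_R(0)}[\nabla\varphi(\tilde u)-\nabla\varphi(\tilde u-e)]^\top e\,\mathrm d\tilde u$, handle that growth by a slab estimate near a single facet of the normal fan, and close the remaining bounded range of $\|v\|$ by compactness (correctly observing that A4 forces $V$ to be bounded). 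Your method is more direct and geometrically transparent for $Q_\mathbb{E}$ on its own; the Stieltjes/CDF machinery, on the other hand, is exactly what generalizes to $Q_{EE}$ and $Q_{D_+}$ via Lemma~\ref{lemma1}, which is the paper's actual purpose. Two spots in your sketch deserve one more line each: the uniform-in-$e$ volume bound for the slab region needs an explicit sub-cone of the chosen facet whose fixed-width neighbourhood misses all other facets (finiteness of the fan then makes the constants uniform), and strict positivity of $\int_{B_\rho(0)}[\nabla\varphi(u)-\nabla\varphi(u-v)]^\top v\,\mathrm du$ for \emph{every} $v\neq0$ is safest argued by noting that its vanishing would force $\varphi$ to be affine on $[-v/2,v/2]$, hence $\varphi(v)+\varphi(-v)=0$, contradicting full-dimensionality of $\Xi$ --- your neighbourhood-of-the-segment argument can fail when $v$ itself lies on a facet of the fan.
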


The main idea of the proof is to show monotonicity of the gradient of $Q_{\mathbb{E}}$ which involves calculating the a.e. existing derivative under the integral sign and rewriting $Q'_{\mathbb{E}}$ as a Stieltjes-integral. Invoking the integration-by-parts formula for Stieltjes-integrals (which here boils down to rearranging a finite number of terms) yields an expression which can be estimated conveniently. The key arguments exploit the special structure of the linearity complex of $\varphi$ to which the following proposition from \cite{WalkupWets1969} relates to. We shall use these results throughout the text without further reference.

\begin{lem}\label{param_lemma}
Assume $A1$ and that there exists some $\xi$ such that $W^\intercal \xi \leq q$. 
It is well known from Linear Programming that under these two conditions the linear programs
\begin{align*}
& \min \{ q^\intercal y \ | \ Wy = t, y\geq 0 \} \text{ and}\\
& \max \{ t^\intercal z \ | \ W^\intercal t \geq q\}
\end{align*}
are solvable for all $t$, their optimal values coincide and the polyhedron $M_D \coloneqq \{ z \ | \ W^\intercal z \geq q\}$ is the convex hull
of its finitely many vertices $\{ d_i \ | \ i \in I \coloneqq \{ 1,\ldots,N \} \}$. We also have
\begin{itemize}
\item[(i)] $\varphi(t) = \max_{i \in I} d_i^\intercal t$.
\item[(ii)] $\varphi(t) = d_i^\intercal t$ for all $t \in K_i \coloneqq \{ z \ | \ (d_i - d_j)^\intercal z \geq 0 \text{ for all $j \in I$} \}$.
\item[(iii)] $\bigcup_{i \in I} K_i = \mathbb{R}^s$ and each intersection $K_i \cap K_j$ with $i \neq j$ is a common closed
face of dimension strictly less than $s$.
\item[(iv)] $\text{dim}(K_i \cap K_j) = s-1$ if and only if $d_i$ and $d_j$ are adjacent vertices of $M_D$.
\item[(v)] Every $K_i$ is a finite union of simplicial cones which can be written as images of $\mathbb{R}^s_+$ under
linear transformations induced by basis submatrices of $W$.
\end{itemize}
\end{lem}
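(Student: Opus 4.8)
The plan is to read everything off from linear programming duality together with the Minkowski--Weyl description of bounded polyhedra, viewing the collection $\{K_i\}_{i\in I}$ as the normal fan of $M_D$. First I would settle solvability and strong duality: by $A1$ the primal feasible set $\{y\ge 0:Wy=t\}$ is nonempty for every $t$, and the given $\xi$ is a feasible point for the dual, so $M_D\neq\emptyset$; weak duality then bounds the primal below and the dual above, and LP strong duality yields that both programs are solvable with equal optimal values, so $\varphi(t)$ is finite and equals $\max\{t^\top z : z\in M_D\}$ for all $t$. Next I would show that $M_D$ is in fact a polytope: $A1$ forces $W$ to be surjective, hence of full row rank, so $W^\top$ is injective and $M_D$ contains no line and therefore has vertices; and since the dual optimum is finite for \emph{every} objective direction $t$, $M_D$ admits no recession direction and is bounded. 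Minkowski's theorem then gives $M_D=\mathrm{conv}\{d_i:i\in I\}$ with $I$ finite.

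Given this, (i) and (ii) are immediate. A linear functional attains its maximum over a polytope at a vertex, so $\varphi(t)=\max_{z\in M_D}t^\top z=\max_{i\in I}d_i^\top t$, which is (i); and if $t\in K_i$ then $d_i^\top t\ge d_j^\top t$ for all $j$, so the maximum is attained at $d_i$, giving $\varphi(t)=d_i^\top t$, which is (ii). For the covering statement in (iii), for any $t$ pick $i$ attaining $\max_j d_j^\top t$; then $t\in K_i$, so $\bigcup_{i\in I}K_i=\mathbb{R}^s$. Each $K_i$ is a polyhedral cone, being a finite intersection of half-spaces $\{(d_i-d_j)^\top z\ge 0\}$.

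To identify $K_i\cap K_j$ ($i\neq j$, so $d_i\neq d_j$ as vertices are distinct) as a common face, I would note that on this set $(d_i-d_j)^\top z=0$, and conversely check $K_i\cap\{(d_i-d_j)^\top z=0\}\subseteq K_j$ and symmetrically; since $(d_i-d_j)^\top z\ge 0$ is one of the defining inequalities of $K_i$, this set is exactly the face of $K_i$ it exposes, likewise a face of $K_j$, and it lies in the codimension-one subspace $\{(d_i-d_j)^\top z=0\}$, hence has dimension at most $s-1$; closedness is clear. This also exhibits $\{K_i\}$ as the normal fan of $M_D$. Then (iv) is the standard polarity between the faces of a polytope and the cones of its normal fan: $k$-dimensional faces of $M_D$ correspond to $(s-k)$-dimensional cones of the fan, so the $(s-1)$-dimensional intersections $K_i\cap K_j$ correspond precisely to the $1$-dimensional faces, i.e.\ the edges, of $M_D$, which join adjacent vertices $d_i,d_j$. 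I would spell this out concretely by taking a relative-interior point $z$ of an $(s-1)$-dimensional $K_i\cap K_j$ and observing that $\arg\max_{w\in M_D}z^\top w$ is then a one-dimensional face of $M_D$ containing both $d_i$ and $d_j$, and conversely by computing the normal cone of an edge $[d_i,d_j]$ and checking it has dimension $s-1$.

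For (v) I would invoke LP basis theory: since $W$ has full row rank, for $t\in K_i$ every optimal primal solution may be chosen basic, $y=B^{-1}t$ on the basic coordinates and $0$ elsewhere, for some invertible $s\times s$ submatrix $B$ of $W$ whose associated dual vertex is $d_i$; for such a fixed $B$ the reduced costs are independent of $t$, so the set of $t$ where this basic solution is feasible and optimal is $\{t:B^{-1}t\ge 0\}=B(\mathbb{R}^s_+)$, a simplicial cone contained in $K_i$, and taking the union over the finitely many bases associated with $d_i$ covers $K_i$. The main obstacle I anticipate is part (iv): turning the informal normal-fan duality into a precise argument with the dimension bookkeeping handled correctly, rather than merely quoting it; once duality, boundedness of $M_D$, and the face description in (iii) are in place, the remaining steps are routine.
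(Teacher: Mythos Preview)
The paper does not actually prove this lemma: it is stated as a collection of well-known facts from parametric linear programming, attributed to Walkup--Wets~\cite{WalkupWets1969}, and used thereafter without proof. Your sketch is therefore strictly more than what the paper offers, and the argument you outline is correct and standard. The identification of $\{K_i\}$ with the normal fan of the polytope $M_D$, the use of LP duality and boundedness of $M_D$ (via finiteness of the dual value for every $t$) to get the vertex description, and the basis decomposition for (v) are exactly the right tools. Your worry about (iv) is well placed---the polarity between $k$-faces of $M_D$ and $(s-k)$-cones of the fan is the substantive content here---but the concrete route you indicate (take a relative-interior point of an $(s{-}1)$-dimensional $K_i\cap K_j$ and identify the dual optimal face) works. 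One minor remark: in (iii) you show $K_i\cap K_j=K_i\cap\{(d_i-d_j)^\top z=0\}$, which is indeed a face of $K_i$, and symmetrically of $K_j$; you might add a word that it is the \emph{same} set from both sides, hence a common face, which you have implicitly but not quite explicitly.
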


We will extend the results concerning strong convexity of risk-neutral objective functions in \cite{Schultz1994} to risk-averse stochastic programs. More specifically, we will consider the expected excess over target $\eta$
\begin{align}\label{EEeq}
Q_{EE}(x) \coloneqq \int \max \{ \eta, \varphi(z-x) \}\,
\mu(\text{d}z)
\end{align}
and the upper semideviation
\begin{align*}
Q_{D_+}(x) \coloneqq \int \max\{ Q_{\mathbb{E}}(x), \varphi(z-x) \} \, \mu(\text{d}z).
\end{align*}
Due to the similar structure of $Q_{EE}$ and $Q_{D_+}$ it will be convenient to consider the function
\begin{align*}
Q_{g}(x) \coloneqq \int \max\{ g(x), \varphi(z-x) \} \, \mu(\text{d}z)
\end{align*}
where $g$ is a continuously differentiable, convex function, and derive a convenient formula for $[Q'_{g}(x+u)- Q'_{g}(x)]u$ first which is then analyzed further for the special cases $g \equiv \eta$ and $g = Q_{\mathbb{E}}$. Note that under assumptions $A1-A3$ the functions $Q_{g}(x)$ are well defined and convex on 
all of $\mathbb{R}^s$.

\subsection{Strong convexity of $Q_{EE}$} \label{SubsecStrongConvexityQEE}

In order to give a formula for $[Q'_{g}(x+u)- Q'_{g}(x)]u$ in a concise way, 
we will use the following notations:
\begin{align*}
& y_0(x,u) \coloneqq g'(x)u, \ y_i(u) \coloneqq -d_i^\intercal u, i \in I \\
& I(x,u)(\tau) \coloneqq \{ i \in I \cup \{0\} \ | \ y_i(x,u) \leq \tau \} \\
& M_{g > \varphi}(x) \coloneqq \{ z \ | \ g(x) > \varphi(z-x) \}, \ M_{g < \varphi}(x) 
\coloneqq \{ z \ | \ g(x) < \varphi(z-x) \} \\
& M_0(x) \coloneqq M_{g > \varphi}(x), \ M_i(x) \coloneqq M_{g <\varphi}(x) \cap \big( x + K_i \big), i \neq 0.
\end{align*}
We shall work with an additional assumption to ensure differentiability of $Q_g$:
\begin{itemize}
\item[A5] The sets $\{ z \ | \ 0 = \varphi(z-x) \}$ have $\mu$-measure zero for every $x$.
\end{itemize}
Geometrically speaking, A5 holds if and only if $0$ is not a vertex of $M_D$.

\begin{lem}\label{lemma1}
Assume that A1-A5 hold. Then for all $x, x+u \in V$ the following equation holds: 
\begin{align}\label{representation1}
[Q'_{g}(x+u)-Q'_{g}(x)]u =
\int_{\mathbb{R}} \mu\big(  \bigcup_{i \in I(x,u)(\tau)} M_i(x) \big\backslash 
\bigcup_{i \in I(x+u,u)(\tau)}M_i(x+u)  \big) \ \text{d}\tau.
\end{align}
\end{lem}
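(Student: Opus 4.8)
The plan is to compute $Q_g'$ pointwise via differentiation under the integral sign, turn the resulting gradient into a one-dimensional (Stieltjes-type) integral over a level-set parameter $\tau$, and then read off the difference $[Q_g'(x+u)-Q_g'(x)]u$ as the integral of a measure of a symmetric-difference-type set. First I would fix $x$ and a direction $u$ and analyze the integrand $z\mapsto \max\{g(x),\varphi(z-x)\}$ as a function of $x$ along the ray $x+su$. For each fixed $z$, the map $s\mapsto \max\{g(x+su),\varphi(z-x-su)\}$ is convex and piecewise affine in $s$ away from the breakpoints; its derivative (where it exists) is $g'(x)u$ on $M_{g>\varphi}(x)$ and $-d_i^\top u$ on $M_i(x)=M_{g<\varphi}(x)\cap(x+K_i)$, by Lemma~\ref{param_lemma}(i)--(ii) together with the chain rule. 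Assumptions A4 (density bounded below, hence in particular the relevant boundary sets are Lebesgue-null) and A5 (the set $\{z:\varphi(z-x)=0\}$ is $\mu$-null, equivalently $0$ is not a vertex of $M_D$, which here handles the locus $g(x)=\varphi(z-x)=$ breakpoint between the two branches) guarantee that the exceptional $z$ form a $\mu$-null set for every $x$, so dominated convergence (dominated using A3 for integrability of $\|z\|$) legitimizes differentiating under the integral and gives
\begin{equation*}
Q_g'(x)u = \int_{M_{g>\varphi}(x)} g'(x)u\,\mu(\mathrm{d}z) + \sum_{i\in I}\int_{M_i(x)} (-d_i^\top u)\,\mu(\mathrm{d}z).
\end{equation*}

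Next I would rewrite each scalar coefficient $y_i$ ($y_0(x,u)=g'(x)u$, $y_i(u)=-d_i^\top u$) using the layer-cake / distribution-function identity $a=\int_{\mathbb{R}}\big(\mathbf{1}_{\{a\ge 0\}}\mathbf{1}_{\{\tau< a\}}-\mathbf{1}_{\{a<0\}}\mathbf{1}_{\{a\le\tau\}}\big)\,\mathrm{d}\tau$, or more simply group the contribution of each set $M_i(x)$ across the threshold $\tau$: for a fixed $x$,
\begin{equation*}
Q_g'(x)u = \sum_{i\in I\cup\{0\}} y_i\,\mu(M_i(x)) = \int_{\mathbb{R}} \Big[\mu\big(\textstyle\bigcup_{i\in I(x,u)(\tau)\cap\{i:y_i<0\}} M_i(x)\big) - \mu\big(\textstyle\bigcup_{i\notin I(x,u)(\tau),\, y_i\ge 0} M_i(x)\big)\Big]\,\mathrm{d}\tau,
\end{equation*}
where I use that the sets $\{M_i(x)\}_{i\in I\cup\{0\}}$ partition $\mathbb{R}^s$ up to $\mu$-null sets (Lemma~\ref{param_lemma}(iii) for the cones, A5 and A4 for the boundary between the $g>\varphi$ and $g<\varphi$ regions), and $I(x,u)(\tau)=\{i:y_i\le\tau\}$ by definition. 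I would organize the bookkeeping so that, after subtracting the analogous expression at $x+u$ and using that $\mu(M_i(x))$-sums telescope appropriately, the negative and positive parts recombine into the single difference $\mu\big(\bigcup_{i\in I(x,u)(\tau)}M_i(x)\setminus\bigcup_{i\in I(x+u,u)(\tau)}M_i(x+u)\big)$; this is the discrete "integration by parts" step alluded to in the remarks after Theorem~\ref{theorem1}, and it is essentially a rearrangement of finitely many terms under the $\tau$-integral.

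The main obstacle, and the step I would spend the most care on, is the justification of differentiation under the integral sign together with the identification of the $\mu$-null exceptional sets — specifically showing that $Q_g$ is genuinely differentiable (not merely directionally differentiable) at every $x\in V$ with the stated gradient, and that the "interface" $\{z:\varphi(z-x)=g(x)\}$, which is where the $\max$ switches branches and where $s\mapsto\max\{g(x+su),\varphi(z-x-su)\}$ can have a kink, contributes nothing. Here A5 is doing real work: on that interface one has $\varphi(z-x)=g(x)$, and I expect the argument to reduce the bad set, after a change of variables, to a translate of $\{t:\varphi(t)=c\}$ for the relevant constant $c$; when $c=0$ this is exactly the set A5 declares null, and for $c\ne 0$ it is a Lebesgue-null piecewise-hyperplane set (a finite union of faces of the $K_i$), hence $\mu$-null by A4. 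A secondary technical point is measurability and the use of Fubini/Tonelli to interchange the $\tau$-integral with $\mu$: one must check the sets $\bigcup_{i\in I(x,u)(\tau)}M_i(x)$ are jointly measurable in $(\tau,z)$, which follows because $I(x,u)(\tau)$ is monotone in $\tau$ and changes only at the finitely many values $\{y_i\}$. Once these measure-theoretic points are settled, the remaining computation is the finite rearrangement described above, and formula \eqref{representation1} follows.
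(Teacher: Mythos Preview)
Your outline gets the skeleton right---differentiate under the integral, recognize $Q_g'(x)u$ as the finite weighted sum $\sum_{i\in I\cup\{0\}} y_i\,\mu(M_i(x))$, and convert to a $\tau$-integral via a CDF/layer-cake device---but it skips the one substantive step that makes formula \eqref{representation1} hold \emph{as written}. After the Stieltjes/layer-cake manipulation you arrive at
\[
[Q_g'(x+u)-Q_g'(x)]u \;=\; \int_{\mathbb R}\bigl(F_x(\tau)-F_{x+u}(\tau)\bigr)\,\mathrm d\tau,
\]
with $F_x(\tau)=\mu\bigl(\bigcup_{i\in I(x,u)(\tau)}M_i(x)\bigr)$ and similarly for $F_{x+u}$. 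The integrand here is a \emph{difference of two measures}, $\mu(A_\tau)-\mu(B_\tau)$; the lemma claims it equals the measure of a \emph{set difference}, $\mu(A_\tau\setminus B_\tau)$. These coincide only if $B_\tau\subseteq A_\tau$ (up to $\mu$-null sets), i.e.
\[
\bigcup_{i\in I(x+u,u)(\tau)} M_i(x+u)\ \subseteq\ \bigcup_{i\in I(x,u)(\tau)} M_i(x)\qquad\text{for every }\tau.
\]
This inclusion is the heart of the lemma and is not bookkeeping: it is a geometric statement whose proof in the paper is a case analysis using the convexity of $g$ (through the monotonicity $g'(x)u\le g'(x+u)u$) together with the cone inclusion $\bigcup_{-d_i^\top u\le\tau}(u+K_i)\subset\bigcup_{-d_i^\top u\le\tau}K_i$ established in \cite{Schultz1994}. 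Your sentence ``the negative and positive parts recombine into the single difference \dots\ essentially a rearrangement of finitely many terms'' is precisely where the gap sits; no finite rearrangement turns a signed integrand into a nonnegative one without an argument of this kind, and the set-difference form is what the downstream estimates in Theorems~\ref{theorem2} and \ref{ThSemidev} rely on.

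You also place the ``main obstacle'' at the differentiation-under-the-integral step; in fact that part is handled quickly (A4 makes the cone boundaries Lebesgue-null, and A5 covers the level set $\{z:\varphi(z-x)=g(x)\}$ in the delicate case $g(x)=0$), while the real work is the inclusion above. So the priorities in your plan are inverted: tighten the ``recombination'' paragraph into an actual proof of $B_\tau\subseteq A_\tau$, and the rest of your sketch goes through.
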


\begin{proof}
First note that the sets
\begin{align*}
\{ z \ | \ g(x) = \varphi(z-x) \}
\end{align*}
with $g(x) \neq 0$ all have $\mu$-measure zero which is a direct consequence
of conditions A1 and A2. The case $g(x) = 0$ is covered by assumption A5. 
In connection with A4 we can calculate $Q'_{g}(x)u$ by differentiation under the integral sign as
\begin{align}
\label{imp-formula1}
Q'_{g}(x)u = \mu(M_{g > \varphi}(x))g'(x)u - \sum_{i \neq 0}
\mu( M_i(x) ) \, d_i^\intercal u,
\end{align}
Following the argument in [1], we see that for fixed $x$ and $u$ we can rewrite \eqref{imp-formula1} as the expected value of a discrete probability distribution with values $y_0 \coloneqq g'(x)u, \, y_i \coloneqq -d_i^\intercal u$
and probabilities $\pi_0 \coloneqq \mu(M_{g > \varphi}(x)), \, \pi_i \coloneqq \mu(M_i(x))$ respectively. By transformation to the induced measure and using integration by parts for Riemann-Stieltjes integrals we arrive at
\begin{align*}
[Q'_{\mathcal{D}_+}(x+u)-Q'_{\mathcal{D}_+}(x)]u = \int (F_x(\tau) - F_{x+u}(\tau)) \, \text{d}\tau, 
\end{align*}
where $F_x = \sum_{i \in I_{x,u}(\tau)} \pi_i$ is the cdf of the discrete distribution just defined. Note that the boundary terms in the integration by parts-formula cancel out because $(F_x-F_{x+u})(\tau) = 0$ for sufficiently big $|\tau|$. \\

For proving the lemma we will demonstrate that 
\begin{align*}
F_x(\tau) - F_{x+u}(\tau) = \mu\big(  \bigcup_{i \in I(x,u)(\tau)} M_i(x) \big\backslash 
\bigcup_{i \in I(x+u,u)(\tau)}M_i(x+u)  \big)
\end{align*}
which can be done by showing that for all $\tau$
\begin{align}
\label{inclusion1}
 \bigcup_{i \in I(x+u,u)(\tau)} M_i(x+u) \subset \bigcup_{i \in I(x,u)(\tau)} M_i(x).
\end{align}
To see why this sufficient, note that by lemma \ref{param_lemma} and A4
\begin{align*}
F_x(\tau) = \sum_{i \in I(x,u)(\tau)}\mu(M_i(x)) =   \mu\big( \bigcup_{i \in I(x,u)(\tau)} M_i(x)\big)
\end{align*}
and thus, if \eqref{inclusion1} holds,
\begin{align*} 
F_x(\tau) - F_{x+u}(\tau) 
& = \sum_{i \in I(x,u)(\tau)}\mu(M_i(x)) - \sum_{i \in I(x+u,u)(\tau)}\mu(M_i(x+u)) \\
& = \mu\big( \bigcup_{i \in I(x,u)} M_i(x) \big\backslash \bigcup_{i \in I(x+u,u)} M_i(x+u) \big).
\end{align*}
From the proof in \cite[theorem 2.2]{Schultz1994} it is known that 
\begin{align}\label{inclusion2}
\bigcup_{i \in I(x+u,u)(\tau) \backslash \{0\}} u + K_i \subset \bigcup_{i \in I(x,u)(\tau) \backslash \{0\}} K_i
\end{align}
and we shall use this result to simplify the proof of inclusion \eqref{inclusion1}. \\

Let $z_0 \in \bigcup_{i \in I(x+u,u)(\tau)} M_i(x+u) \Big \backslash 
\bigcup_{i \in I(x,u)(\tau)} M_i(x)$, then we have two cases to consider: \\

case 1) $z_0 \in M_0(x+u)$, from which follows $g'(x+u)u \leq \tau$. By convexity of $g$ we conclude
$g'(x)u \leq \tau$, so $0 \in I(x,u)(\tau)$ and we have  $z_0 \notin M_0(x)$ by assumption. We thus have $g(x) < \varphi(z-x) = d_{i_1}^\intercal (z-x)$ for a suitable index $i_1 \neq 0$ which needs to be such that $-d_{i_1}^\intercal u > \tau$, otherwise $i_i \in I(x,u)(\tau)$. On the other hand we have
\begin{align*}
g(x+u) &> \varphi(z-x-u) \geq d_{i_1}^\intercal (z-x-u) = d_{i_1}^\intercal (z-x) - d_{i_1}^\intercal u  \\ 
& > g(x) - d_{i_1}^\intercal u \geq g(x+u) - g'(x+u)u - d_{i_1}^\intercal u \\
& \geq  g(x+u) - \tau -  d_{i_1}^\intercal u,
\end{align*}
so we have $-  d_{i_1}^\intercal u < \tau$, which is a contradiction. So case 1) is impossible. \\

case 2) $z_0 \in M_{i_0}(x+u)$ for some index $i_0 \neq 0$ (which means 
$g(x+u) < \varphi(z-x-u) = d_{i_0}^\intercal (z-x-u)$ and $-d_{i_0}^\intercal u \leq \tau$). By \eqref{inclusion2} it follows $z_0 \in M_0(x)$ and
by assumption we have $g'(x)u > \tau$, from which we conclude by convexity of
$g$ that also $g'(x+u)u > \tau$. From here we get with the same arguments as above
\begin{align*}
d_{i_0}^\intercal (z-x) - d_{i_0}^\intercal u &= d_{i_0}^\intercal (z-x-u) = \varphi(z-x-u) 
> g(x+u) \\
& \geq g(x) + g'(x)u > g(x) 
+ \tau  > \varphi(z-x) + \tau \\
& \geq d_{i_0}^\intercal (z-x) + \tau,
\end{align*}
which yields $-d_{i_0}^\intercal u > \tau$ in contradiction to what was stated above.
\end{proof}

For fixed $x$ and $u$, the integrand on the right-hand side of \eqref{representation1} is piecewise constant in $\tau$. The proof of strong convexity will consist of restricting the integration to a suitable subset where the integrand is constant 
in $\tau$ and minorized by a quantity that can be estimated easier. The next lemma gives the proof of formula (2.13) in \cite[theorem 2.2]{Schultz1994} concerning estimates of this integrand:
\begin{lem}
\label{lemma2}
Let $K = \{u \ | \ c_j^\intercal u \geq 0, \ j \in J \}$ be a pointed, polyhedral cone in $\mathbb{R}^s$ 
(in particular $|J| < \infty$) with  $\text{int}(K) \neq \emptyset$.
Then there is some $\alpha > 0$ such that
\begin{align*}
\inf_{ u \in K } \,
\sup_{ j \in J}  \, c_j^\intercal u \, \geq \alpha \|u\|.
\end{align*}
\end{lem}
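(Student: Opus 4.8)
The statement asserts that on a pointed polyhedral cone $K$ with nonempty interior, the function $u \mapsto \sup_{j \in J} c_j^\intercal u$ is bounded below by $\alpha\|u\|$ for some $\alpha > 0$. The plan is to reduce to a compactness argument on the unit sphere. First I would note that the function $f(u) \coloneqq \sup_{j \in J} c_j^\intercal u$ is positively homogeneous of degree one (since each $c_j^\intercal u$ is linear and $J$ is finite), so it suffices to show that $\inf\{ f(u) \mid u \in K,\ \|u\| = 1 \} \eqqcolon \alpha > 0$; the general estimate then follows by scaling. The set $S \coloneqq K \cap \{ u \mid \|u\| = 1 \}$ is compact (closed and bounded), and $f$ is continuous as a finite maximum of linear functions, so the infimum is attained at some $u^\ast \in S$, i.e. $\alpha = f(u^\ast)$.

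It remains to rule out $\alpha \le 0$. Suppose $f(u^\ast) \le 0$; then $c_j^\intercal u^\ast \le 0$ for every $j \in J$. On the other hand, $u^\ast \in K$ means $c_j^\intercal u^\ast \ge 0$ for every $j \in J$, so $c_j^\intercal u^\ast = 0$ for all $j$. This says $u^\ast$ lies in the lineality space of $K$, namely $\{ u \mid c_j^\intercal u = 0,\ j \in J \}$. But $K$ is pointed, which means precisely that this lineality space is $\{0\}$; hence $u^\ast = 0$, contradicting $\|u^\ast\| = 1$. Therefore $\alpha = f(u^\ast) > 0$, and for arbitrary $u \in K$ we get $f(u) = \|u\|\, f(u/\|u\|) \ge \alpha\|u\|$ when $u \ne 0$, with the case $u = 0$ trivial.

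The argument is essentially routine; the only point requiring a little care is making sure the notion of ``pointed'' is used correctly. I would state explicitly that a polyhedral cone $K = \{ u \mid c_j^\intercal u \ge 0,\ j \in J\}$ is pointed if and only if it contains no nontrivial linear subspace, equivalently $\{ u \mid c_j^\intercal u = 0 \ \forall j \} = \{0\}$, equivalently the vectors $\{c_j\}_{j\in J}$ span $\mathbb{R}^s$. The hypothesis $\mathrm{int}(K) \neq \emptyset$ is what guarantees $S$ is nonempty (so the infimum is over a nonempty set and the "$\inf$" in the statement is well-defined and finite), though in fact nonemptiness of $K\setminus\{0\}$ would already suffice for that; I would keep the hypothesis as stated since it is how the lemma is applied. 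No step here is a genuine obstacle — the lemma is a packaging of standard facts about pointed cones, and the main work is simply recording the homogeneity reduction and the lineality-space computation cleanly.
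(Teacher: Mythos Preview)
Your proposal is correct and follows essentially the same route as the paper: reduce by positive homogeneity to the compact set $K \cap \{\|u\|=1\}$, then argue by contradiction that the infimum is strictly positive because a minimizer with $c_j^\intercal u^\ast = 0$ for all $j$ would violate the structural hypothesis on $K$. The only cosmetic differences are that you invoke attainment of the infimum directly (the paper uses a minimizing sequence and passes to a subsequence) and that you correctly identify the contradiction as being with \emph{pointedness} of $K$, whereas the paper somewhat imprecisely attributes it to ``full-dimensionality''; your formulation is the cleaner one.
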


\begin{proof}
For arbitrary $0 \neq u \in K$ it holds that $\frac{u}{\|u\|} \in K$ due to $K$ being a cone, so it is enough to prove that
\begin{align*}
\inf_{ \substack{u \in K \\ \|u\| = 1} }
\sup_{j} c_j^\intercal u = \alpha > 0.
\end{align*}
From the definition of $K$ it directly follows that $\alpha \geq 0$. If $\alpha$ was $0$, we could find a sequence $\{u_n\}_{n \geq 1}$ in the compact set 
$K_1 \coloneqq K \cap \{ u \ | \ \|u\| = 1 \}$ such that
\begin{align*}
\sup_{ j } c_j^\intercal u_n \rightarrow 0,
\end{align*}
and by passing to a subsequence we had $u_{n_k} \rightarrow u_0 \in K_1$ and by continuity $c_j^\intercal u_0 = 0$ for all indices $j$. That however would be a contradiction to the full-dimensionality of $K$ because in particular 
$\|u_0\|=1. \ \ \square$
\end{proof}

The main result concerning strong convexity of the expected excess $Q_{EE}$
is given in the next theorem:
\begin{thm}[Strong convexity for $Q_{EE}$]\label{theorem2}
Assume that $A1-A5$ hold. Then there exists some $c > 0$ such that
for all $\eta < c$ we have strong convexity of the expected excess $Q_{EE}$ over target $\eta$ on the open, 
convex set $V$ (cf. A4). 
\end{thm}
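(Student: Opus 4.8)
The plan is to estimate the integrand of \eqref{representation1} from below on a well-chosen slice of $\tau$-values and then integrate. Fix $x, x+u \in V$ with $u \neq 0$. Specialise Lemma \ref{lemma1} to $g \equiv \eta$, so that $y_0(x,u) = 0$ identically and $M_0(x) = M_{\eta > \varphi}(x) = \{z \mid \eta > \varphi(z-x)\}$. The first task is to choose a target $c$ small enough that $M_0(x)$ is negligible for all $x \in V$: since $\varphi$ is piecewise linear and, by A2 and Lemma \ref{param_lemma}(i), bounded below on bounded sets, and since by A4 the density $\theta \geq r$ on $V + B_\rho(0)$, one shows that $\mu(M_{\eta > \varphi}(x))$ is controlled by the Lebesgue volume of a fixed bounded set shrinking with $\eta$; picking $c$ with this volume small (and in particular $c \leq 0$ if needed) makes the index $0$ effectively absent from $I(x,u)(\tau)$ for the relevant range of $\tau$, reducing the situation to the risk-neutral structure of \cite{Schultz1994} but now on the set $V$ rather than all of $\mathbb{R}^s$.

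Next, set $\delta(\tau) \coloneqq \bigcup_{i \in I(x,u)(\tau)} M_i(x) \setminus \bigcup_{i \in I(x+u,u)(\tau)} M_i(x+u)$, whose $\mu$-measure is the integrand. Using \eqref{inclusion1} and \eqref{inclusion2}, one identifies, for a band of $\tau$ just below $\sup_i y_i(u) = \sup_i (-d_i^\intercal u) = \varphi(-u)$ (invoking Lemma \ref{param_lemma}(i)), a concrete subset of $\delta(\tau)$ of the form $(x + C) \cap (V + B_\rho(0))$ where $C$ is a fixed cone-like region (a union of the simplicial cones from Lemma \ref{param_lemma}(v)) of positive Lebesgue density depending only on $W, q$. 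Concretely, the band has length comparable to $\varphi(-u) - \max_{i \in I(x+u,u)} y_i(u)$, which by Lemma \ref{lemma2} applied to the pointed cone generated by the adjacency structure of $M_D$ (Lemma \ref{param_lemma}(iv)) is bounded below by $\alpha\|u\|$ for a constant $\alpha > 0$ independent of $u$. Meanwhile the $\mu$-measure of the slice is bounded below, via A4, by $r$ times the Lebesgue volume of the cone section, which is itself bounded below by $\beta\|u\|^{s-1}$ — but here the geometry must be arranged so that one slice of fixed cross-section survives, giving a lower bound of the form (measure of slice) $\geq \gamma\|u\|$ uniformly, exactly as in the passage from (2.13) to the conclusion of \cite[Theorem 2.2]{Schultz1994}.

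Combining, $[Q'_{EE}(x+u) - Q'_{EE}(x)]u = \int_{\mathbb{R}} \mu(\delta(\tau))\,\text{d}\tau \geq (\alpha\|u\|)\cdot(\gamma\|u\|) = \kappa\|u\|^2$ with $\kappa = \alpha\gamma > 0$ depending only on the problem data, which is the strong monotonicity characterisation of strong convexity with modulus $\kappa$ on $V$; since $Q_{EE}$ is already convex and differentiable under A1--A5, this finishes the proof. I expect the main obstacle to be the bookkeeping in the second step: one must verify that restricting $\tau$ to the chosen band genuinely produces a subset of $\delta(\tau)$ that lies inside $V + B_\rho(0)$ (so that A4 applies) and that this subset contains a translate of a fixed full-dimensional cone piece whose relevant cross-sectional volume does not degenerate as $u$ varies over directions — this uniformity is what makes Lemma \ref{lemma2} indispensable and is the technical heart, whereas the choice of $c$ in the first step and the final integration are comparatively routine.
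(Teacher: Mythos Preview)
Your proposal has a genuine gap in the first step that undermines the whole reduction. You propose to pick $c$ small (possibly $c\le 0$) so that $M_0(x)=\{z:\varphi(z-x)<\eta\}$ becomes ``negligible'' and the problem collapses to the risk-neutral case of \cite{Schultz1994}. Two things go wrong here. First, the theorem explicitly asserts $c>0$, so you must cover a range of \emph{positive} targets $\eta$; retreating to $c\le 0$ does not prove the statement. Second, and more seriously, for any $\eta>0$ the risk-neutral argument cannot be reused as written: the estimate in \cite{Schultz1994} builds the lower bound on the integrand from a region contained in $B_\rho(x)$, and on that region $\varphi(z-x)$ is close to $0<\eta$, so the region lies in $M_0(x)$ rather than in $M_i(x)=(x+K_i)\cap\{\varphi(\cdot-x)>\eta\}$ for $i\neq 0$. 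Thus precisely the piece that carries the risk-neutral estimate is cut out of the sets $M_i(x)$ once $\eta>0$. (Your side claim that $M_0(x)$ is controlled by the volume of a bounded set is also incorrect: the sublevel set $\{z:\varphi(z-x)<\eta\}$ has recession cone $\{v:d_l^\intercal v\le 0\ \forall l\}$, which is nontrivial unless $0\in\mathrm{int}\,M_D$, so $M_0(x)$ is typically unbounded for every $\eta$.)

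The paper's proof does not attempt to suppress $M_0$. Instead it splits each cone as $K_i=K_i^+\cup K_i^-$ according to the sign of $d_i^\intercal z$, fixes $u\in K_i^+$, and works with the full-dimensional polyhedron $M_i^+(x)=(x+K_i^+)\cap\{\varphi(\cdot-x)\ge\eta\}$ and its facet $F=\{d_i^\intercal(\cdot-x)=\eta\}\cap(x+K_i^+)$. The role of ``$\eta$ small'' is purely geometric: it guarantees that some vertex $y_0\in F$ satisfies $\|y_0-x\|<\rho$, so a patch $F_0=F\cap B_{\rho-\|y_0-x\|}(y_0)$ of positive $(s{-}1)$-dimensional measure lies inside $V+B_\rho(0)$. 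The set $\tilde F=\bigcup_{0\le\lambda<1}(\lambda u+F_0)$ is then shown to sit inside $M_i(x)\setminus\bigcup_{-d_l^\intercal u<0}M_l(x+u)$ and inside $V+B_\rho(0)$, and Cavalieri plus $d_i^\intercal u\ge\tfrac{\alpha}{2}\|u\|$ gives the $\beta\|u\|$ volume bound. The case $u\in K_i^-$ is reduced to the previous one by the substitution $x'=x+u$, $u'=-u$. Your sketch does not contain this construction near the $\eta$-level facet, which is the actual mechanism by which positive $\eta$ are handled; without it, the argument as proposed does not close.
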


Before the proof we shall look at the geometry of the expected excess
and give a heuristic reasoning of why the theorem should hold:
\begin{center}
\includegraphics[width=0.8\textwidth,trim=0 60 0 140, clip=true]{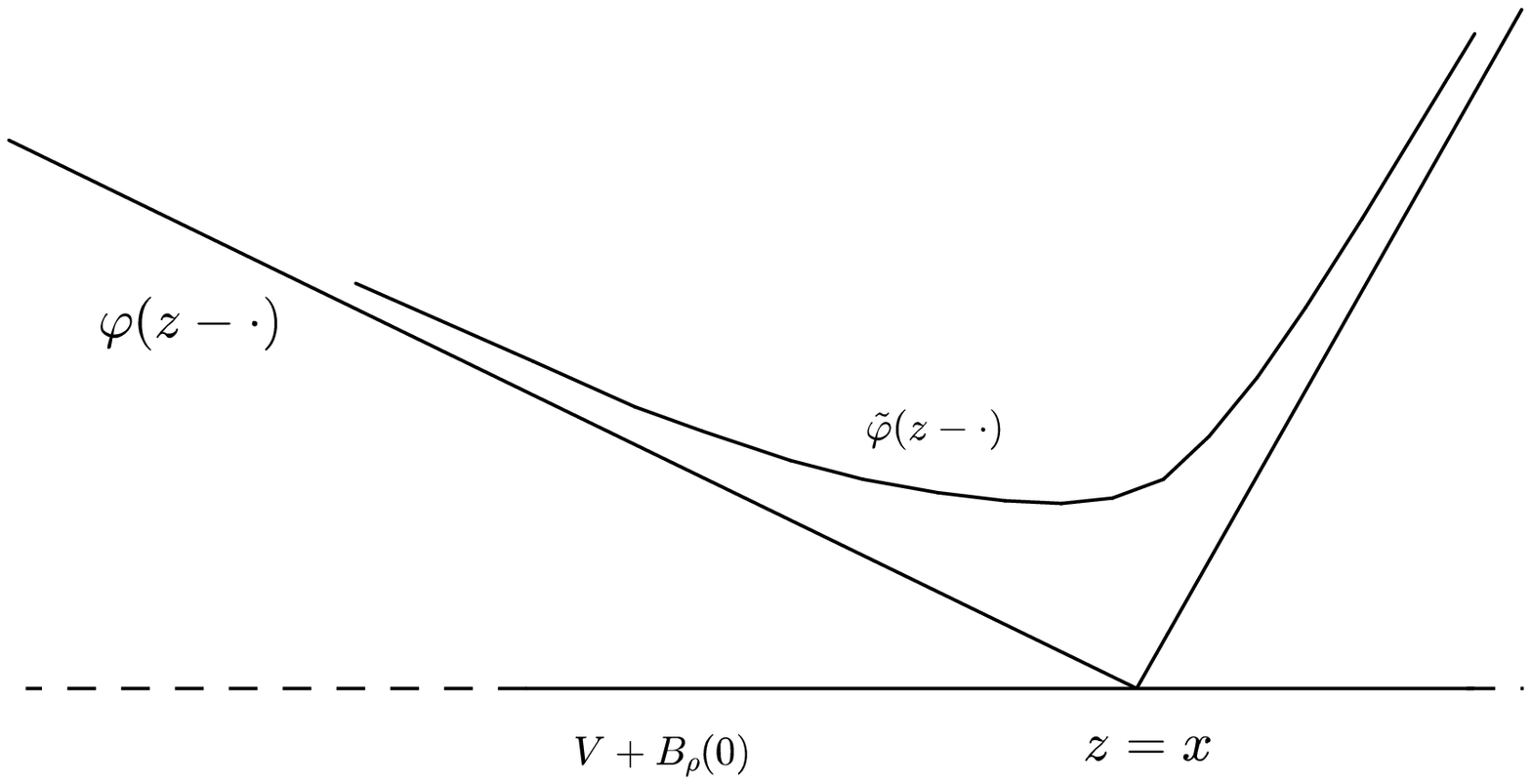}
\end{center}
The picture shows the one-dimensional case with $\varphi \geq 0$ und $\eta < 0$,
so that $Q_{EE} = Q_\mathbb{E}$.
For fixed $z \in V$ the function $\varphi(z-\cdot)$ can be approximated 
by a $\mathcal{C}^2$-function $\tilde{\varphi}(z-\cdot)$ which has
second derivative $\varphi''(z-x) \geq C > 0$ with $x$ close to $z$. 
With condition A4 in mind we get
\begin{align*}
&Q_\mathbb{E}''(x) = \int \varphi''(z-x) \, \mu(\text{d}z)
= \int \theta(t) \, \varphi''(t-x) \, \text{d}t \\
&\geq \int_{B_\rho(x)} \theta(t) \, \varphi''(t-x) \, \text{d}t 
\geq \int_{B_\rho(x)} r \, C \, \text{d}t > 0.
\end{align*}
If $\eta$ is gradually increased, the graph of the integrand 
in \eqref{EEeq},\\
$\max\{ \eta, \varphi(\cdot-x) \}$, is truncated as in the next picture: 
\begin{center}
\includegraphics[width=0.8\textwidth,trim=0 60 0 160, clip=true]{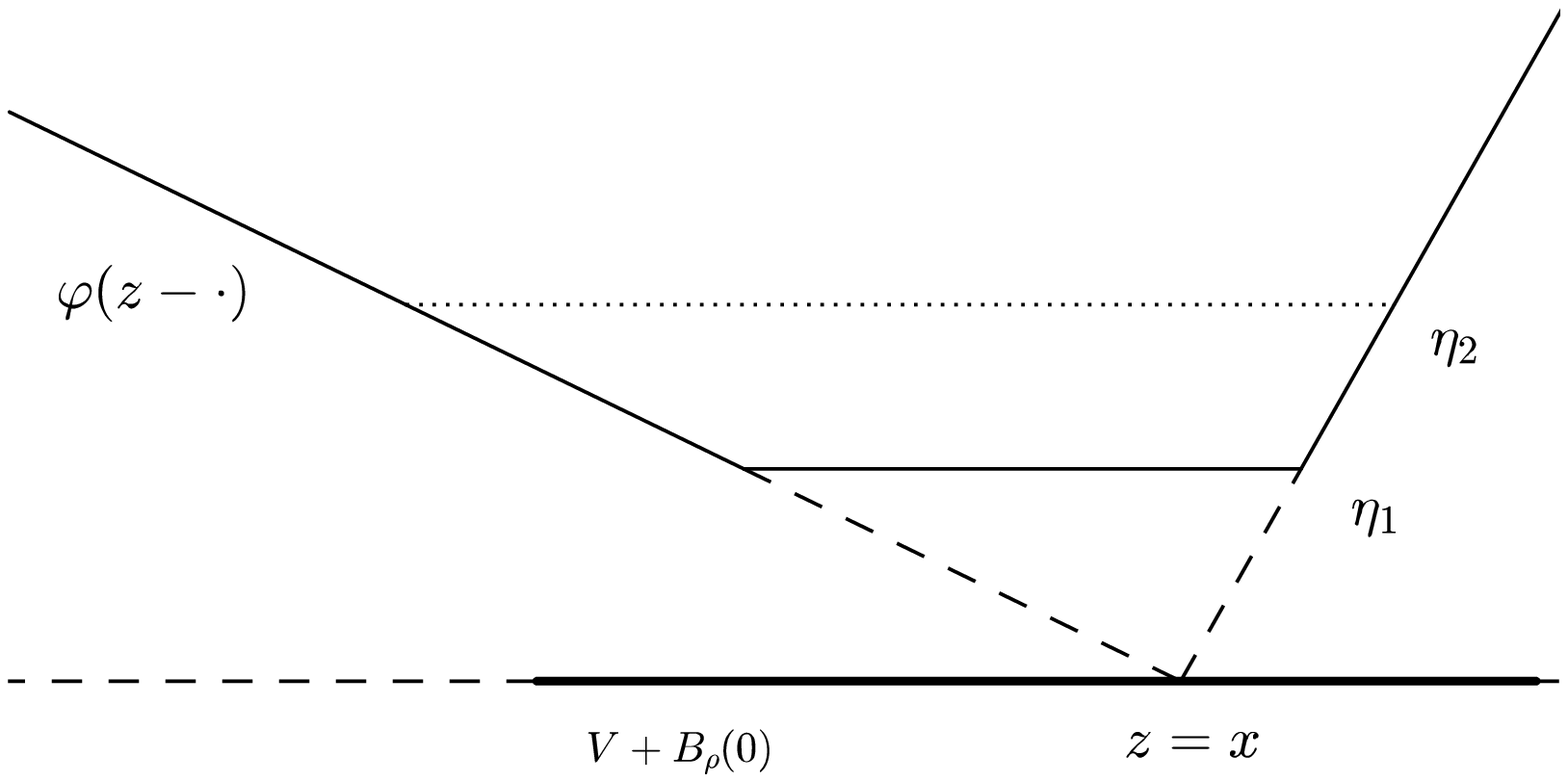}
\end{center}
As long as the "bumps" in the graph lie above the set $V + B_\rho(0)$, the 
approximation argument from before yields strong convexity of $Q_{EE}$,
for the approximants of $\max\{ \eta, \varphi(z-\cdot) \}$ will be curved
close to the bumps. 
Note that the best possible modulus of strong convexity will be smaller 
the greater $\eta$ is chosen. When $\eta$ exceeds a certain threshold
the graph of $\max\{ \eta, \varphi(z-\cdot) \}$ will be flat in all of
$V + B_\rho(0)$, so strong convexity of $Q_{EE}$ can no longer be expected
to hold.\\

\begin{proof}[Theorem]
By lemma \ref{param_lemma} (iii) we have $\mathbb{R}^s = \bigcup_{i \in I} K_i$ with full-dimensional, pointed polyhedral cones $K_i$. Define $K_i^+ = \{ z \in K_i \ | \ d_i^\intercal z \geq 0 \}$ and $K_i^- = \{ z \in K_i \ | \ d_i^\intercal z \leq 0 \}$,
so we can write
\begin{align*}
K_i = K_i^+ \cup K_i^-,
\end{align*}
and at least one of the two cones $K_i^+$ and $K_i^-$ has nonempty interior.\\
For arbitrary $u \neq 0$ we have $u \in K_i$ for some $i \in I$. Consider the case $u \in K_i^+$ when $\text{int}(K_i^+) \neq \emptyset$ first. The other case will be reduced to this one later. Note that under assumptions A1 and A2 $K_i$ is a polyehdral cone generated by the vectors $d_i - d_j$, where all $d_j$ are adjacent to $d_i$.  Lemma \ref{lemma2} can then be applied to yield some $\alpha> 0$, which - due to there being only finitely many $K_i$ - can be chosen independently of $u$ and $i$, and some index $j$ such that 
\begin{align*}
\alpha \|u\| \leq (d_{i} - d_{j})^\intercal u,  
\end{align*}
from which follows
\begin{align*}
-d_{i}^\intercal u + \alpha \|u\| \leq -d_{j}^\intercal u.
\end{align*}
By lemma \ref{lemma1} we get (note that the index sets $I(x,u)(\tau)$ introduced above lemma \ref{lemma1} do not depend on $x$ in the case of $g \equiv \eta$)
\begin{align}
&[Q'_{EE}(x+u)-Q'_{EE}(x)]u \nonumber \\
& = \int_{\mathbb{R}} \mu\big(  \bigcup_{l \in I(u)(\tau)} M_l(x) \big\backslash 
\bigcup_{l \in I(u)(\tau)}M_l(x+u)  \big) \ \text{d}\tau \nonumber \\
& \geq \int_{-d_{i}^\intercal u}^{-d_{i}^\intercal u + \alpha \|u\|} 
\mu\big(  \bigcup_{l \in I(u)(\tau)} M_l(x) \big\backslash 
\bigcup_{l \in I(u)(\tau)}M_l(x+u)  \big) \ \text{d}\tau. \label{EE1}
\end{align}
Because of $u \in K_i^+$ we have $d_i^\intercal u \geq 0$, so there two cases to consider: $-d_i^\intercal u \leq 0 \leq -d_i^\intercal u + \alpha \|u\|$ and $-d_i^\intercal u + \alpha \|u\| \leq 0$. Given $-d_i^\intercal u \leq 0 \leq -d_i^\intercal u + \alpha \|u\|$ it has to be $d_i^\intercal u \geq \frac{1}{2}\alpha \|u\|$
or $-d_i^\intercal u + \alpha \|u\| \geq \frac{1}{2}\alpha \|u\|$. Assuming the latter we can continue \eqref{EE1} as follows:
\begin{align*}
& \geq \int_{0}^{-d_{i}^\intercal u + \alpha \|u\|} 
\mu\big(  \bigcup_{l \in I(u)(\tau)} M_l(x) \big\backslash 
\bigcup_{l \in I(u)(\tau)}M_l(x+u)  \big) \ \text{d}\tau \\
& =  \int_{0}^{-d_{i}^\intercal u + \alpha \|u\|} 
\mu\big(  \bigcup_{l \in I(u)(\tau)}x + K_l \big\backslash 
\bigcup_{l \in I(u)(\tau)} x + u + K_l  \big) \ \text{d}\tau \\
& \geq \int_{0}^{-d_{i}^\intercal u + \alpha \|u\|} 
\mu\big(  x + K_i \big\backslash 
\bigcup_{-d_l^\intercal u < -d_j^\intercal u} x + u + K_l  \big) \ \text{d}\tau \\
& \geq \frac{1}{2}\alpha \|u\| \, \mu\big(  x + K_i \big\backslash 
\bigcup_{-d_l^\intercal u< -d_j^\intercal u} x + u + K_l  \big).
\end{align*}
From here on it can be argued as in \cite[page 10]{Schultz1994} that there exists some $\beta > 0$ only depending on $i$ such that
\begin{align*}
\mu\big(  x + K_i \big\backslash 
\bigcup_{-d_l^\intercal u < -d_j^\intercal u} x + u + K_l  \big) \geq \beta \|u\|
\end{align*}
which concludes the analysis of the first case.\\
Going back to the other cases we will always have $d_i^\intercal u \geq \frac{1}{2}\alpha \|u\|$ from here on.
We estimate \eqref{EE1} as
\begin{align*}
& \geq \int_{-d_{i}^\intercal u}^{\min\{ 0, -d_i^\intercal u + \alpha \|u\| \}} 
\mu\big(  \bigcup_{l \in I(u)(\tau)} M_l(x) \big\backslash 
\bigcup_{l \in I(u)(\tau)}M_l(x+u)  \big) \ \text{d}\tau \\
& \geq \int_{-d_{i}^\intercal u}^{\min\{ 0, -d_i^\intercal u + \alpha \|u\| \}} 
\mu\big( M_i(x) \big\backslash 
\bigcup_{-d_l^\intercal u < 0}M_l(x+u)  \big) \ \text{d}\tau \\
& \geq \frac{1}{2}\alpha \|u\| \ \mu\big(   M_i(x) \big\backslash \bigcup_{-d_l^\intercal u < 0}M_l(x+u)  \big) 
\end{align*}
Note that the last inequality follows because $-d_i^\intercal u < \tau < 0$ for each $\tau$ in question and thus
\begin{align*}
 \bigcup_{l \in I(u)(\tau)} M_l(x) &\supset M_i(x), \\
 \bigcup_{l \in I(u)(\tau)}M_l(x+u) &\subset \bigcup_{-d_l^\intercal u < 0}M_l(x+u),
\end{align*}
and the sets on the right-hand side do not depend on $\tau$ anymore.\\

It remains to be shown that there exists some $\beta > 0$ independent of $x$ and $u$ such that 
\begin{align}
\label{EE6}
\mu\big(   M_i(x) \big\backslash \bigcup_{-d_l^\intercal u < 0}M_l(x+u)  \big) \geq \beta \|u\|.
\end{align}
Without loss of generality assume $\eta > 0$. It can easily be shown that the set
\begin{align*}
M_i^+(x) \coloneqq \big( x+K_i^+ \big) \cap \{ z \ | \ \varphi(z-x) \geq \eta \} 
\end{align*}
is a full-dimensional polyhedron with vertices and with $s-1$-dimensional facet
\begin{align*}
F \coloneqq 
\big( x+K_i^+ \big) \cap \{ z \ | \ \varphi(z-x) = \eta \}.
\end{align*}
The situation can be pictured like this:
\begin{center}
\includegraphics[width=0.9\textwidth]{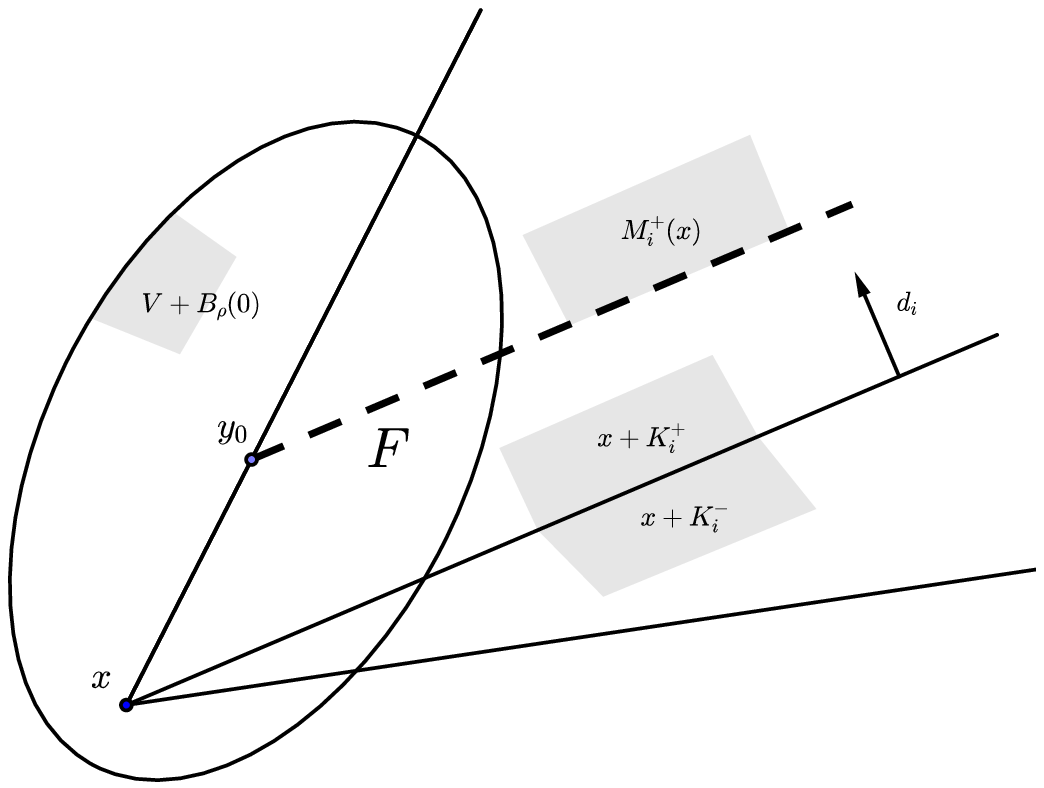}
\end{center}
With condition A4 in mind assume $\eta$ is small enough so that for at least one vertex $y_0$ of $M_i^+(x)$ contained in $F$ we have $\| y_0 - x \| = \rho_0 < \rho$. The set
\begin{align}
\label{EE7}
F_0 \coloneqq F \cap B_{\rho - \rho_0}(y_0)
\end{align}
then has positive $s-1$-dimensional measure. We will now show that
\begin{align}
\label{EE3}
\tilde{F} \coloneqq \bigcup_{0 \leq \lambda < 1} \lambda u + F_0 \subset \big( M_i^+(x) \big\backslash 
\bigcup_{-d_l^\intercal u < 0}M_l(x+u) \big).
\end{align}
The inclusion $\bigcup_{0 \leq \lambda < 1}\lambda u + F_0 \subset M_i^+(x)$ directly follows from $u \in K_i^+$, $K_i^+$ being a convex cone and the definition of $F_0$. \\

For arbitrary $l$ such that $-d_l^\intercal u < 0$, $0\leq \lambda < 1$ and $\tilde{z} \in F_0$, which we
can write as $y_0 + z'$ with $d_i^\intercal z' = 0$,  we have (note that $\tilde{z} - x \in K_i^+$)
\begin{align*}
d_l^\intercal (\lambda u + \tilde{z} - x -u ) &= (1-\lambda) (-d_l^\intercal u) + d_l^\intercal (\tilde{z} - x)\\
&\leq (1-\lambda) (-d_l^\intercal u) + d_i^\intercal (\tilde{z} - x)\\
&< d_i^\intercal (y_0 - x) +  d_i^\intercal z' = \eta 
\end{align*}
so that $\lambda u + \tilde{z} \notin M_l(x+u)$. Thus \eqref{EE3} is shown.\\

As the last step in the proof we will show that
\begin{align}
\label{EE4}\tilde{F} \subset V + B_\rho(0),
\end{align}
$V$ being the set from condition A4, and that there is some $\beta > 0$ such that
\begin{align}
\label{EE5}
\lambda ( \tilde{F} ) \geq \beta \|u\|.
\end{align}
From \eqref{EE3}, \eqref{EE4} and \eqref{EE5} follows \eqref{EE6} as follows:
\begin{align*}
\mu\big(   M_i(x) \big\backslash \bigcup_{-d_l^\intercal u < 0}M_l(x+u)  \big) 
 \geq \mu( \tilde{F}) 
= \int_{\tilde{F}} g(t) \ \text{d}t 
\geq r \ \lambda(\tilde{F})
\geq r \beta \|u\|.
\end{align*}
Inclusion \eqref{EE4} holds because $x + \lambda u \in V$ for all $0 \leq \lambda < 1$ and thus by \eqref{EE7}
\begin{align*}
&\lambda u + F_0 = (x + \lambda u) + (y_0 - x) - y_0 + F_0 \\
&\subset V + 
\text{cl}\big(B_{\rho_0}(0)\big)
+ B_{\rho - \rho_0}(0) \subset V + B_{\rho}(0).
\end{align*}
Estimate \eqref{EE5} is correct because we have $d_i^\intercal u \geq \frac{1}{2}\alpha \|u\|$, $F_0$ has positive $s-1$-dimensional volume and the density of $\mu$ is bounded below on $V + B_\rho(0)$ by a positive constant. Applying Cavalieri's principle yields the desired result. To get an idea of
the geometry behind these arguments, a picture might be helpful:
\begin{center}
\includegraphics[width=1.0\textwidth]{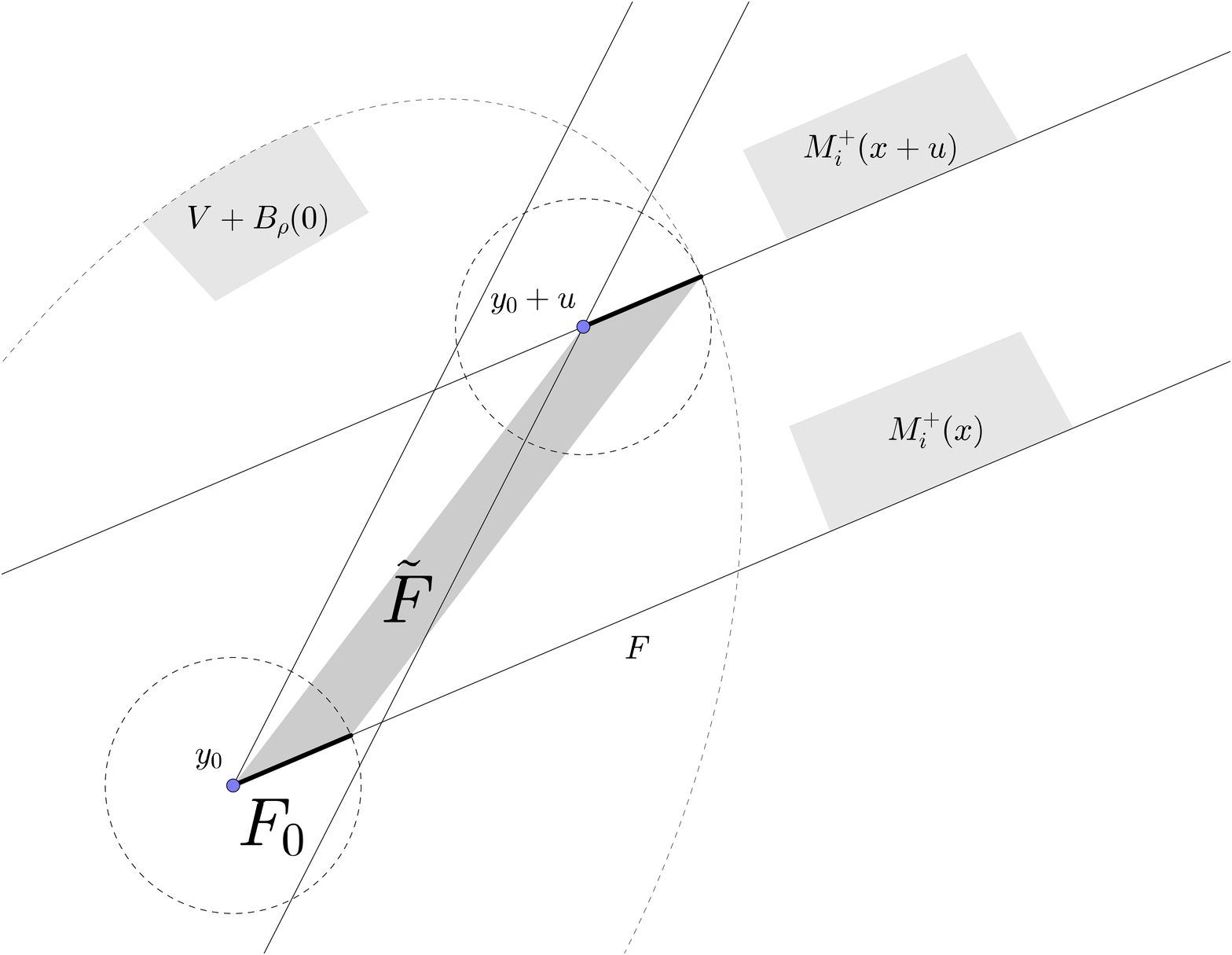}
\end{center}
Consider now the case $u \in K_i^-$ when $\text{int}(K_i^-) \neq \emptyset$:
It then holds $-u \in K_{i_0}^+$ for some $i_0$ with $\text{int}(K_{i_0}^+) \neq \varnothing$. Again invoking lemma \ref{lemma2} yields some $\alpha > 0$ 
and some index $j_0$ such that $d_{j_0}$ and $d_{i_0}$ are adjacent with
\begin{align*}
\alpha \|u\| \leq (d_{i_0} - d_{j_0})^\intercal (-u),  
\end{align*}
from which follows
\begin{align}
\label{EE8}
-d_{i_0}^\intercal u - \alpha \|u\| \geq -d_{j_0}^\intercal u.
\end{align}
By lemma \ref{lemma1} we thus get
\begin{align*}
&[Q'_{g}(x+u)-Q'_{g}(x)]u \\
& = \int_{\mathbb{R}} \mu\big(  \bigcup_{l \in I(u)(\tau)} M_l(x) \big\backslash 
\bigcup_{l \in I(u,u)(\tau)}M_l(x+u)  \big) \ \text{d}\tau \\
& \geq \int_{-d_{i_0}^\intercal u - \alpha \|u\|}^{-d_{i_0}^\intercal u} 
\mu\big(  \bigcup_{l \in I(u)(\tau)} M_l(x) \big\backslash 
\bigcup_{l \in I(u,u)(\tau)}M_l(x+u)  \big) \ \text{d}\tau. \\
\end{align*}
The integrand can be rewritten as follows (note that for $i \neq j$ the sets $M_i(y) \cap M_j(y)$ have $\mu$-measure $0$ for all choices of $y$):
\begin{align*}
&\mu\big(  \bigcup_{l \in I(u)(\tau)} M_l(x) \big\backslash 
\bigcup_{l \in I(u,u)(\tau)}M_l(x+u)  \big) \\
& = \mu\big( \ \big( \mathbb{R}^s \big\backslash \bigcup_{\{l \ | \ y_l(u)>\tau\}} M_l(x) \big) \big\backslash 
\big(  \mathbb{R}^s \big\backslash \bigcup_{\{l \ | \ y_l(u,u)>\tau\}} M_l(x+u)  \big) \ \big) \\
& = \mu\big(  \bigcup_{\{l \ | \ y_l(u,u)>\tau\}} M_l(x+u) 
\big\backslash \bigcup_{\{l \ | \ y_l(u)>\tau\}} M_l(x)  \big) \\
& \geq \mu \big(  M_{i_0}(x+u) \big\backslash \bigcup_{\{ l \ | \ y_l(u) > -d_{j_0}^\intercal u \}} M_l(x) \big).
\end{align*}
The last inequality holds because by \eqref{EE8} we have $-d_{j_0}^\intercal u \leq -d_{i_0}^\intercal u - \alpha \|u\| < \tau < -d_{i_0}^\intercal u$. By renaming $x' = x+u, \ u' = -u $, observing $\{ l \ | \ y_l(u) > -d_{j_0}^\intercal u \} = \{ l \ | \ y_l(u') < -d_{j_0}^\intercal(u') \}$ and noting that the lower estimate for the integrand does not depend on $\tau$ we can continue the estimate for the integral above as
\begin{align*}
[Q'_{g}(x' + u')-Q'_{g}(x')]u' \geq \alpha \|u\| \ \mu\big(  M_{i_0}(x') \big\backslash \bigcup_{\{ l \ | \ y_l(u',u') < -d_{j_0}^\intercal u' \}} M_l(x'+u') \big),
\end{align*}
where $x', x'+u' \in V$ with $u' \in K_{i_0}^+$ such that $\text{int}(K_{i_0}) \neq \emptyset$.
The desired estimate now follows as demonstrated previously after \eqref{EE1}.
\end{proof}

\subsection{Strong convexity of $Q_{D_+}$} \label{SubsecStrongConvexityQD}

\begin{thm}[Strong convexity for the upper semideviation $Q_{D_+}$] \label{ThSemidev}
Under assumptions A1-A5 there exists some continuous function 
$C : \mathbb{R}^s \rightarrow [0,\infty)$ with $C(u) \rightarrow 0$, as 
$\|u\| \rightarrow 0$, such that $C(u) > 0$ for all $u \neq 0$ and
\begin{align*}
[Q'_{D_+}(x+u)-Q'_{D_+}(x)]u \geq C(u) \, \|u\|^2
\end{align*}
for all $x, x+u \in V$.
If we have the additional assumption
\begin{itemize}
\item[A6] $0 \leq q$
\end{itemize} 
then $Q_{D_+}$ is a strongly convex function on $V$.
\end{thm}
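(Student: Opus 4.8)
First I would specialise Lemma~\ref{lemma1} to $g\coloneqq Q_{\mathbb{E}}$. This choice is admissible, since under A1--A5 the function $Q_{\mathbb{E}}$ is continuously differentiable and, by Theorem~\ref{theorem1}, strongly convex on $V$ with some modulus $\kappa>0$; hence $Q_{D_+}=Q_g$ and $(Q'_{\mathbb{E}}(x+u)-Q'_{\mathbb{E}}(x))u\geq\kappa\|u\|^{2}$ for all $x,x+u\in V$. In the representation \eqref{representation1} the index sets $I(x,u)(\tau)$ and $I(x+u,u)(\tau)$ coincide on $I$ (there $y_i(u)=-d_i^{\intercal}u$ does not depend on the base point) and differ at most in whether the special index $0$ is present, with $0\in I(x,u)(\tau)$ exactly when $Q'_{\mathbb{E}}(x)u\leq\tau$. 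Since $Q'_{\mathbb{E}}(x+u)u-Q'_{\mathbb{E}}(x)u\geq\kappa\|u\|^{2}$, the interval $J(x,u)\coloneqq[\,Q'_{\mathbb{E}}(x)u,\,Q'_{\mathbb{E}}(x+u)u\,)$ is nonempty with length at least $\kappa\|u\|^{2}$ whenever $u\neq0$.

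Next, since the integrand in \eqref{representation1} is a $\mu$-measure, hence nonnegative, I would keep only the contribution of $\tau\in J(x,u)$. For such $\tau$ one has $0\in I(x,u)(\tau)$ but $0\notin I(x+u,u)(\tau)$, so $\bigcup_{i\in I(x,u)(\tau)}M_i(x)\supseteq M_0(x)$, whereas Lemma~\ref{param_lemma}(iii) gives $\bigcup_{i\in I(x+u,u)(\tau)}M_i(x+u)\subseteq\bigcup_{i\in I}M_i(x+u)=M_{g<\varphi}(x+u)$. Consequently the integrand is at least $\mu\bigl(M_0(x)\setminus M_{g<\varphi}(x+u)\bigr)$, and since $\{z:\varphi(z-x-u)=Q_{\mathbb{E}}(x+u)\}$ is $\mu$-null (A1, A2, A5) this equals $\mu\bigl(M_0(x)\cap M_0(x+u)\bigr)$. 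Integrating the resulting $\tau$-independent lower bound over $J(x,u)$ yields
\[
[Q'_{D_+}(x+u)-Q'_{D_+}(x)]u\ \geq\ \kappa\,\|u\|^{2}\,\mu\bigl(M_0(x)\cap M_0(x+u)\bigr)\qquad(x,x+u\in V).
\]

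For the first assertion it then remains to exhibit a continuous $C$ with $C(0)=0$, $C(u)>0$ for $u\neq0$, and $\kappa\,\mu(M_0(x)\cap M_0(x+u))\geq C(u)$ uniformly in admissible $x$. The ingredients are: $\varphi(z-x)$ is never $\mu$-a.s.\ constant (all vertices $d_i$ are nonzero by A5, so every level set of $\varphi(\cdot-x)$ lies in finitely many hyperplanes and is $\mu$-null), whence $\mu(M_0(x))=\mathbb{P}(\varphi(z-x)<Q_{\mathbb{E}}(x))>0$ for every $x$; $V$ is bounded, being a convex open set of finite Lebesgue measure by A4, so that $x\mapsto\mu(M_0(x)\cap M_0(x+u))$ is continuous (all relevant boundaries are $\mu$-null) and attains a positive infimum over the relevant compact parameter set; and this infimum varies continuously in $u$ and tends to $\inf_x\mu(M_0(x))>0$ as $\|u\|\to0$, after which a suitable regularisation produces $C$. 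The step I expect to be the main obstacle is precisely the uniformity in $x$ of these estimates: without A6 the threshold $Q_{\mathbb{E}}(x)$ can come arbitrarily close to the level $0$, which is exactly where $\varphi$ may be flat, so $M_0(x)$ cannot in general be localised around $x$ and $\mu(M_0(x))$ need not stay uniformly bounded away from $0$.

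For the second assertion, A6 removes this degeneracy. Indeed $0\leq q$ forces $\varphi\geq0$ with $\varphi(0)=0$, so for every $x\in V$
\[
Q_{\mathbb{E}}(x)=\int\varphi(z-x)\,\mu(\mathrm{d}z)\ \geq\ r\!\int_{B_\rho(0)}\varphi(w)\,\mathrm{d}w\ =:\ c_0\ >\ 0,
\]
the constant $c_0$ being positive because $\varphi$ is nonnegative and, for any vertex $d_j$, satisfies $\varphi(\varepsilon d_j/\|d_j\|)\geq\varepsilon\|d_j\|>0$ for small $\varepsilon$. Writing $L\coloneqq\max_{i\in I}\|d_i\|$ for the Lipschitz constant of $\varphi$ and $\delta\coloneqq\min\{c_0/L,\rho\}$, one obtains $B_\delta(x)\subseteq M_0(x)$ for all $x\in V$; hence for $\|u\|\leq\delta/2$ the ball $B_{\delta/4}(x+\tfrac{1}{2}u)$ lies both in $M_0(x)\cap M_0(x+u)$ and in $V+B_\rho(0)$, so by A4
\[
\mu\bigl(M_0(x)\cap M_0(x+u)\bigr)\ \geq\ r\,\operatorname{vol}\bigl(B_{\delta/4}(0)\bigr)\ =:\ m_1\ >\ 0
\]
uniformly in $x\in V$ and in such $u$. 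Combined with the displayed inequality this gives $[Q'_{D_+}(x+u)-Q'_{D_+}(x)]u\geq\kappa m_1\|u\|^{2}$ for all $x\in V$ and all $u$ with $\|u\|\leq\delta/2$ and $x+u\in V$; equivalently, the gradient of $Q_{D_+}-\tfrac{\kappa m_1}{2}\|\cdot\|^{2}$ is monotone along all sufficiently short segments in the open convex set $V$, hence this function is convex on $V$, i.e.\ $Q_{D_+}$ is strongly convex on $V$ with modulus $\kappa m_1$.
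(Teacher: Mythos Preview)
Your route is genuinely different from the paper's. Both proofs start from Lemma~\ref{lemma1} with $g=Q_{\mathbb E}$ and restrict the $\tau$-integral to $[Q'_{\mathbb E}(x)u,\,Q'_{\mathbb E}(x+u)u]$, whose length is $\ge\kappa\|u\|^{2}$ by Theorem~\ref{theorem1}. The paper then exploits the cone geometry: it picks $i,j$ with $u\in K_i$, $-u\in K_j$, shows that on the restricted interval both $0$ and $i$ lie in $I(x,u)(\tau)$ while $0$ and $j$ are excluded from $I(x+u,u)(\tau)$, and bounds the integrand below by $\mu\bigl((x+K_i)\cap(x+u+K_j)\bigr)$ (for the first assertion) together with $\mu\bigl((x+K_i)\cap M_{Q_{\mathbb E}\ge\varphi}(x+u)\bigr)$ (for the second, under A6). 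You instead use only the index~$0$ on both sides and arrive at the cleaner minorant $\mu\bigl(M_0(x)\cap M_0(x+u)\bigr)$. Your derivation of this bound and your A6-argument (localising $M_0(x)$ near $x$ via $Q_{\mathbb E}\ge c_0>0$, then the local-to-global passage from short-segment monotonicity to convexity) are correct and noticeably more elementary than the paper's cone estimates.

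What is interesting is that the ``main obstacle'' you flag for the first assertion is not one. You already note that $V$ is bounded (since $\theta\ge r$ on $V+B_\rho(0)$ forces this set to have finite Lebesgue measure); moreover $\mu(M_0(x))=\mathbb P\bigl(\varphi(z-x)<Q_{\mathbb E}(x)\bigr)>0$ for \emph{every} $x$, because $\varphi(\cdot-x)$ has mean $Q_{\mathbb E}(x)$ under $\mu$ and, by A4 and A5, is never $\mu$-a.s.\ constant. Continuity of $x\mapsto\mu(M_0(x))$ on the compact set $\overline V$ then gives $\inf_{x\in\overline V}\mu(M_0(x))>0$ \emph{without} A6, and joint continuity of $(x,u)\mapsto\mu(M_0(x)\cap M_0(x+u))$ yields a uniform lower bound for all small $u$. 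Feeding this into your own local-to-global step already gives strong convexity on $V$ under A1--A5 alone. In other words, your approach not only recovers both assertions but actually renders A6 superfluous---something the paper only conjectures in its closing remarks. The paper's cone-based bound $\mu\bigl((x+K_i)\cap(x+u+K_j)\bigr)$ has the virtue of being purely geometric (independent of the values of $Q_{\mathbb E}$), but it vanishes as $u\to0$, which is precisely why the paper needs the second, A6-dependent term; your probabilistic bound stays uniformly positive for small $u$ and thereby sidesteps that difficulty.
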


\begin{proof}
Let $x, x+u \in V$. 
We apply lemma \ref{lemma1} on $g = Q_{\mathbb{E}}$ to get
\begin{align}
&[Q'_{D_+}(x+u)-Q'_{D_+}(x)]u = [Q'_{Q_{\mathbb{E}}}(x+u)-Q'_{Q_{\mathbb{E}}}(x)]u \nonumber \\
&= \int \mu\big( \bigcup_{l \in I(x,u)(\tau)} M_l(x) \big\backslash 
\bigcup_{l \in I(x+u,u)(\tau)} M_l(x+u) \big) \ \text{d}\tau \nonumber \\
&\geq \int_{  Q_{\mathbb{E}}'(x)u }^{Q'(x+u)u} \mu\big( \bigcup_{l \in I(x,u)(\tau)}
M_l(x) \big\backslash \bigcup_{l \in I(x+u,u)(\tau)}M_l(x+u) \big) \ \text{d}\tau. \label{SD1}
\end{align}
By A1 and A2 we can apply lemma \ref{param_lemma} as we did in the last proof: The fan of cones $\{K_l\}$ covers $\mathbb{R}^s$, so $u \in K_i$ and $-u \in K_j$ for suitable indices $i,j \in I$. By lemma \ref{lemma2} we can find some constant $\alpha>0$ independent of $x$ and $u$ such that $(d_i-d_j)^\intercal u \geq \alpha \|u\|$;
condition A4 and the full-dimensionality of all $K_l$ imply that there exists some $\delta > 0$ such that for all $y \in V$ and all indices $l \in I$ we have $\mu(y+K_l) \geq \delta$. We thus have
\begin{align}
& -d_j^\intercal u - Q_{\mathbb{E}}'(x+u)u = 
\sum_{l=1}^N \mu(x+u+K_l) (d_l - d_j)^\intercal u \nonumber \\ 
&\geq \mu(x+u+K_i) \, (d_i-d_j)^\intercal u \geq 
\delta \alpha \|u\|, \nonumber \\
& Q_{\mathbb{E}}'(x)u - (-d_i^\intercal u ) =
\sum_{l=1}^N \mu(x+K_l) (d_i - d_l)^\intercal u \nonumber \\
&\geq \mu(x+K_j) \, (d_i-d_j)^\intercal u \geq 
\delta \alpha \|u\|. \label{SD2}
\end{align} 
In \eqref{SD1} it is $Q_{\mathbb{E}}'(x)u < \tau < Q_{\mathbb{E}}'(x+u)u$, so by \eqref{SD2} we get $0,i  \in I(x,u)(\tau)$ implying for all $\tau$ in question the inclusion
\begin{align*}
x+K_i \subset \bigcup_{l \in I(x,u)(\tau)}M_l(x) ,
\end{align*}
and $0,j \notin I(x+u,u)(\tau)$, implying for such $\tau$ (note that $M_j(x+u) = (x+u+K_j) \cap M_{Q_{\mathbb{E}}<\varphi}(x+u))$
\begin{align*}
\bigcup_{l \in I(x+u,u)(\tau)}M_l(x+u)
&\subset 
\mathbb{R}^s \big\backslash \big( M_j(x+u) \cup M_{Q_{\mathbb{E}} \geq \varphi}(x+u) \big) \\
&= \mathbb{R}^s \big\backslash \big( \big( x+u+K_j \big) \cup M_{Q_{\mathbb{E}} \geq \varphi}(x+u) \big).
\end{align*}
Combing these two inclusions we can write
\begin{align}
&\bigcup_{l \in I(x,u)(\tau)}
M_l(x) \big\backslash \bigcup_{l \in I(x+u,u)(\tau)}M_l(x+u) \nonumber \\
&\supset \big( ( x+K_i ) \cap ( x+u+K_j )\big) \cup \big( (x+K_i) 
\cap M_{Q_{\mathbb{E}}\geq \varphi}(x+u) \big). \label{SD3}
\end{align}
Note that the last expression does not depend on $\tau$ anymore. Using this and the strong convexity of $Q_{\mathbb{E}}$ on $V$ (cf. theorem \ref{theorem1})
we can continue estimating \eqref{SD1} as
\begin{align*}
& \geq \int_{  Q_{\mathbb{E}}'(x)u }^{Q'(x+u)u} 
\mu\big( ( x + K_i ) \cap (x+u+K_j) \big) \, \text{d}\tau  \\
& \geq  \mu\big( ( x + K_i ) \cap (x+u+K_j) \big)  \ \  C \ \|u\|^2.
\end{align*}
Set $C_i(u) \coloneqq \mu\big( ( x + K_i ) \cap (x+u+K_j) \big)$ and $C \coloneqq \min_i C_i$ does the deal for the first claim of the theorem due to the continuity properties of $\mu$. \\

Assuming A6 and returning to \eqref{SD1} for estimating the integrand and exploiting strong convexity of $Q_{\mathbb{E}}$ again yields
\begin{align*}
&[Q'_{D_+}(x+u)-Q'_{D_+}(x)]u \geq \\ 
& \max\big\{ \mu\big( ( x + K_i ) \cap M_{Q_{\mathbb{E}} \geq \varphi}(x+u) \big), \
\mu\big( ( x + K_i ) \cap (x+u+K_j) \big) \big\} \, C \, \|u\|^2. 
\end{align*}
For showing strong convexity of $Q_{D_+}$ it is thus sufficient to show that there is a constant $c > 0$ such that
\begin{align*}
\max\big\{ \mu\big( ( x + K_i ) \cap M_{Q_{\mathbb{E}} \geq \varphi}(x+u) \big), 
\mu\big( ( x + K_i ) \cap (x+u+K_j) \big) \big\} \geq c.
\end{align*}
Assumption A6 yields $Q_{\mathbb{E}}(y) \geq R$ for all $y \in V$ with some constant $R > 0$. Since $\varphi(0) = 0$ and $\varphi$ is continuous, there are $c_1, r_1 > 0$ such that for all $\|u\| < r_1$ in $K_i$ it is $\mu\big( ( x + K_i ) \cap M_{Q_{\mathbb{E}} \geq \varphi}(x+u) \big) \geq c_1$. Since there are only finitely many $K_i$ that $u$ can be contained in, we see that $c_1$ can actually be chosen independently of $i$.
Furthermore similar arguments as employed in the proof of lemma \ref{lemma2} show that 
$( x + K_i ) \cap (x+u+K_j) \geq c_2 > 0$ for all $u$ such that $\|u\| \geq r$. With this the proof is completed.
\end{proof}

\section{Conclusion} \label{SecConclusion}

As pointed out in section \ref{SubsecStrongConvexity}, strong convexity implies a quadratic growth condition that is instrumental in stability analysis. To illustrate this, we shall assume that the first-stage cost function is convex quadratic and view
\begin{equation}
\label{ParametricQEE}
\min_{x} \left\{ x^\top H x + h^\top x + \int_{\mathbb{R}^s} \max \lbrace g(x), \varphi(z-x) \rbrace ~\mu(\text{d}z) \; | \; x \in X \right\}
\end{equation}
as a parametric problem depending on $\mu$. Note that \eqref{rando} arises as a special case of \eqref{ParametricQEE}. Consider the parameter space $(\mathcal{M}^{1}_s, W_1)$, where
\begin{equation*}
\mathcal{M}^{1}_s := \left\{ \mu \in \mathcal{P}(\mathbb{R}^s) \; | \; \int_{\mathbb{R}^s} \max \|z\| ~\mu(\text{d}z)\right\}
\end{equation*}
is the space of Borel probability measures on $\mathbb{R}^s$ with finite moments of first order and
\begin{align*}
W_1(\mu, \nu) := \inf_{\kappa} \Big\{ \int_{\mathbb{R}^s \times \mathbb{R}^s} \|v-\tilde{v}\|~\kappa(\text{d}(v,\tilde{v})) \; | \; &\kappa \in \mathcal{P}(\mathbb{R}^s \times \mathbb{R}^s), \\ &\kappa \circ \pi_1^{-1} = \mu, \; \pi_2^{-1} = \nu \Big\}
\end{align*}
denotes the $L_1$-Wasserstein distance. In this setting, the risk neutral case arises if $g \equiv -\infty$ and has been analyzed in \cite{RoemischSchultz1991}. Following the same lines, we shall consider the optimal solution set mapping $\Psi: \mathcal{M}^1_s \rightrightarrows \mathbb{R}^n$ given by
\begin{equation*}
\Psi(\mu) := \mathrm{Argmin}_x \left\{ x^\top H x + h^\top x + \int_{\mathbb{R}^s} \max \lbrace g(x), \varphi(z-Tx) \rbrace ~\mu(\text{d}z) \; | \; x \in X \right\}.
\end{equation*}

\begin{prop}[Quantitative stability for $Q_{EE}$ and $Q_{D_+}$] \label{thStability} Assume A1-A5 as well as the following
\begin{itemize}
\item[(a)] $H$ is symmetric and positive semidefinite.
\item[(b)] $g$ is given by a sufficiently small constant $\eta$ (cf. theorem \ref{theorem2}) or by the expectation $Q_{\mathbb{E}}$. In the latter case, additionally assume A6.
\item[(c)] $\mu \in \mathcal{M}^1_s$ is such that $\Psi(\mu) \neq \emptyset$ is bounded and $T\Psi(\mu) \subseteq V$ (cf. A4).
\end{itemize}
Then there exist constants $L, \delta > 0$ such that $\Psi(\nu) \neq \emptyset$ and
\begin{equation}
\label{WassersteinHausdorff}
d_H(\Psi(\mu), \Psi(\nu)) \leq L W_1(\mu, \nu)^{\frac{1}{2}}
\end{equation}
for any $\nu \in \mathcal{M}^1_s$ satisfying $W_1(\mu, \nu) \leq \delta$.
\end{prop}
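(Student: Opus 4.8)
The plan is to view \eqref{ParametricQEE} as a parametric optimization problem whose objective is strongly convex in $x$ (after the transformation $x \mapsto Tx$) uniformly in the parameter $\mu$, and then to apply a standard quantitative-stability argument for parametric programs: strong convexity of the objective at the base parameter gives a quadratic growth condition, the objective is Lipschitz in the parameter with respect to $W_1$, and these two facts combine to a H\"older estimate of order $\tfrac12$ for the argmin mapping. Concretely, write $f(x,\mu) := x^\top H x + h^\top x + \int \max\{g(x),\varphi(z-Tx)\}\,\mu(\mathrm dz)$ and restrict attention to $x$ with $Tx \in V$. First I would note that by (a) the quadratic part is convex, and by (b) together with Theorem \ref{theorem2} (resp. Theorem \ref{ThSemidev} under A6) the integral term is strongly convex as a function of $Tx$ on $V$ with a modulus $\kappa>0$ that does not depend on $\mu$; hence $f(\cdot,\mu)$ satisfies the quadratic growth condition \eqref{growth} on $\{x : Tx \in V\}$ around its minimizer, with a uniform constant. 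Since $\Psi(\mu)\neq\emptyset$ is bounded and $T\Psi(\mu)\subseteq V$ by (c), all minimizers over $X$ lie in a region where this growth estimate is in force, so $\Psi(\mu)$ is in fact a singleton (or, if $T$ has a nontrivial kernel, the growth is in the $Tx$-variable and one works with $T\Psi$).

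The second ingredient is the Lipschitz dependence of $f$ on $\mu$. The key observation is that $z \mapsto \max\{g(x),\varphi(z-Tx)\}$ is Lipschitz in $z$ with a constant independent of $x$: indeed $\varphi(\cdot)=\max_{i\in I}d_i^\top(\cdot)$ by Lemma \ref{param_lemma}(i), so $\varphi$ is globally Lipschitz with constant $\max_i \|d_i\|$, and taking a pointwise maximum with the constant (in $z$) value $g(x)$ preserves this. By the dual (Kantorovich--Rubinstein) representation of $W_1$, it follows that $|f(x,\mu)-f(x,\nu)| \le (\max_i\|d_i\|)\,W_1(\mu,\nu)$ for all admissible $x$. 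I would then invoke the general perturbation lemma for parametric programs (in the spirit of \cite{RoemischSchultz1991}, whose risk-neutral case $g\equiv-\infty$ is recovered here): if $f(\cdot,\mu)$ has quadratic growth with modulus $\kappa$ at its minimizer set and $\sup_x|f(x,\mu)-f(x,\nu)| \le \epsilon$ on a neighbourhood of that set, then the minimizer sets are within Hausdorff distance $O(\sqrt{\epsilon/\kappa})$, provided $\epsilon$ is small enough that the minimizers of $f(\cdot,\nu)$ stay in the region where the estimates are valid. Taking $\epsilon = (\max_i\|d_i\|)\,W_1(\mu,\nu)$ yields \eqref{WassersteinHausdorff} with $L$ depending on $\kappa$ and $\max_i\|d_i\|$, and $\delta$ chosen so that for $W_1(\mu,\nu)\le\delta$ the set $\Psi(\nu)$ is nonempty, contained in a bounded set, and maps into $V+B_\rho(0)$ under $T$.

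The main obstacle is the bookkeeping around the set $V$: Theorems \ref{theorem2} and \ref{ThSemidev} only give strong convexity on $V$, not globally, so one must ensure that the (unknown) minimizers of the perturbed problem $f(\cdot,\nu)$ still satisfy $T x \in V$. This is handled by a continuity/boundedness argument: since $\Psi(\mu)$ is bounded and $T\Psi(\mu)$ is a compact subset of the open set $V$, it has positive distance $2d_0$ from $\partial V$; because the objective values converge uniformly at rate $W_1(\mu,\nu)$ and the growth condition forces near-minimizers of $f(\cdot,\nu)$ to be near $\Psi(\mu)$, for $\delta$ small enough every minimizer of $f(\cdot,\nu)$ lands in the $d_0$-enlargement of $T\Psi(\mu)$, hence still inside $V+B_\rho(0)$. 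A secondary technical point is that, when $\ker T\neq\{0\}$, the objective is only strongly convex in the variable $Tx$, so \eqref{WassersteinHausdorff} should be read (as the risk-neutral source does) with $\Psi$ understood through its image under $T$, or under an additional nondegeneracy assumption on $X$ and $T$ that restores uniqueness; this is exactly parallel to the situation in \cite{RoemischSchultz1991} and requires no new idea.
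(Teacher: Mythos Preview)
Your proposal is correct and follows essentially the same route as the paper: invoke the strong convexity results (Theorems \ref{theorem2} and \ref{ThSemidev}) to obtain quadratic growth at the base measure, combine with Lipschitz dependence of the objective on $\mu$ in $W_1$, and appeal to the stability framework of \cite{RoemischSchultz1991}. The paper's own proof is in fact a one-line reference to \cite[theorem 2.7]{RoemischSchultz1991}, observing that it carries over whenever $g$ is Lipschitz and the integral term is strongly convex on $V$; you have unpacked exactly this and also made explicit the bookkeeping around $V$ and $\ker T$, which the paper leaves implicit. One minor slip: strong convexity is only asserted (and only needed) at the fixed base measure $\mu$ satisfying A4, not ``uniformly in $\mu$'' as you write in the first paragraph---perturbed measures $\nu$ need not have a density---but your subsequent argument uses it only at $\mu$, so this is harmless.
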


\begin{proof}
The proof for the risk neutral case given in \cite[theorem 2.7]{RoemischSchultz1991} can be extended with minor modifications whenever $g$ is Lipschitz continuous and the mapping
\begin{equation*}
x \mapsto \int_{\mathbb{R}^s} \max \lbrace g(x), \varphi(z-x) \rbrace ~\mu(\text{d}z)
\end{equation*}
is strongly convex on $V$. Thus, the corollary follows from theorem \ref{theorem2} and theorem \ref{ThSemidev}.
\end{proof}

\begin{remark}

Following the lines of \cite[theorem 2.4]{RoemischSchultz1996} the $L_1$-Wasserstein distance $W_1(\mu,\nu)^\frac{1}{2}$ in \eqref{WassersteinHausdorff} in theorem \ref{thStability} can be replaced with the subgradient distance
\begin{equation*}
d_{g}(\mu, \nu, \mathrm{cl} \; V) \coloneqq \sup \lbrace \| z^\ast \| \; | \; z^\ast \in \partial(Q_{g}^\nu - Q_{g}^\mu)(Tx), \; x \in \mathrm{cl} \; V \rbrace,
\end{equation*}
where $\partial(Q_{EE}^\nu - Q_{EE}^\mu)$ denotes the Clarke's subdifferential of
\begin{equation*}
x \mapsto \int_{\mathbb{R}^s} \max \lbrace g(x), \varphi(z-x) \rbrace ~(\nu-\mu)(\text{d}z).
\end{equation*}
Furthermore, a generalization of theorem \ref{thStability} to an appropriate Fortet-Mourier metric (see \cite[Chapter 6]{Rachev1991}) seems possible.
\end{remark}

\subsection{Future research}
The next goal in our research is to prove strong convexity results for a broader class of deviation risk measures that
can be represented as convex integral functionals. In this paper we presented a relatively self-contained first-order 
analysis of two prototypical examples. The geometry of the recourse function played an important role throughout the proofs.
A slightly different approach promising good results seems to be a second-order analysis via smooth, strongly convex approximations (see for example \cite{Azagra2013}) of the recourse-function. With this it seems to be possible to drop
assumption A6 completely and take a step towards more involved deviation risk measures. Future research might also
involve duality theory of convex integral functionals and generalized (first and second order) derivatives 
(see \cite{Rockafellar1968} for an early reference) as technical tools and a greater emphasis on stability theory.

\begin{acknowledgements}
The authors would like to thank the Deutsche Forschungsgemeinschaft for supporting the first and second author via the Collaborative Research Center TRR 154.
\end{acknowledgements}

\end{document}